\title{Random walks on quasirandom graphs}
\author{Ben Barber
        \footnote{Department of Pure Mathematics and Mathematical Statistics,
                  Centre for Mathematical Sciences,
                  Wilberforce Road, Cambridge, CB3 0WB, UK.
                  {\tt b.a.barber@dpmms.cam.ac.uk}}
        \and
        Eoin Long
        \footnote{School of Mathematical Sciences,
                  Queen Mary University of London,
                  UK.
                  {\tt e.p.long@qmul.ac.uk}}
       }
\begin{document}

\maketitle

\begin{abstract}
Let $G$ be a quasirandom graph on $n$ vertices, and let $W$ be a random walk on $G$ of length $\length n^2$. Must the set of edges traversed by $W$ form a quasirandom graph? This question was
asked by B\"ottcher, Hladk\'y, Piguet and Taraz. Our aim in this paper is to give a positive answer to this
question. We also prove a similar result for random embeddings of trees.
\end{abstract}

\section{Introduction}
\label{sec:intro}

Given a graph $G$ and sets $A, B \subseteq V(G)$, let
\[
 e_G(A,B) = \left|\left\{(a,b)\in A \times B: ab\in E(G)\right\}\right|
\]
be the number of edges from $A$ to $B$.
A graph $G$ with $n$ vertices and $\rho \binom {n}{2}$ edges is
\emph{$\epsilon$-quasirandom} if
\begin{equation*}
 |e_G(A,B) - \rho |A||B|| < \epsilon |A||B|
\end{equation*}
for all sets $A$, $B \subseteq V(G)$ with $|A|$, $|B| \geq \epsilon n$. Thus a
quasirandom graph resembles a random graph with the same density,
provided we do not look too closely. Quasirandom graphs were introduced by
Thomason \cite{Thomason} and have come to play a central role in probabilistic
and extremal graph theory. The reader is referred to the excellent
survey article by Krivelevich and Sudakov \cite{Krivelevich-Sudakov} for further
details.

The random graph $G_{n, p}$, in which edges appear independently with
probability $p$, is quasirandom with high probability.  More generally, given a
quasirandom graph $G$ we can, with high probability, obtain a new quasirandom
graph $\GP$ by retaining edges of $G$ with some fixed probability $p$.  (The random graph $G_{n,p}$ can be thought of as the result of applying this process
to the complete graph $K_n$.)

Another natural way to choose a random set of edges from an $n$-vertex graph $G$
is given by the following process. A random walk $W$ on $G$ is a sequence of
vertices $W_0, W_1, \ldots, W_l$ where $W_0$ is chosen from some initial
distribution and $W_{i+1}$ is selected uniformly from the neighbours of $W_i$,
with all choices made independently. We will be interested in the case when $W$
traverses some constant fraction of the edges of $G$, so we take the length $l$
of $W$ to be $\length n^2$ for some constant $\length > 0$. To avoid confusion with
the random walk $W$, let $\GW$ denote the random subgraph of $G$ consisting of
those edges traversed by $W$. The main question that we are interested in here 
is the following: given a quasirandom graph $G$, is it true (as with $\GP$) 
that $\GW$ is also quasirandom with high probability?

First note that this is true when this process is applied to the complete graph 
$G=K_n$. Indeed, $\GW$ is very close to $G_{n, p}$ for
some $p$.  This is because the sequence $W_0, W_1, \ldots$ is very nearly a
sequence of independent random vertices of $G$ (`very nearly' because
consecutive terms of the sequence are forced to be distinct).  Then $W_0W_1,
W_2W_3, \ldots$ and $W_1W_2, W_3W_4, \ldots$ are very nearly two sequences of
independent random edges of $G$, so $\GW$ is very close to a random subgraph of
$G$.

The following heuristic suggests that $\GW $ should also be quasirandom for
a general quasirandom graph $G$.
\begin{enumerate}
 \item[(1)] The graph $G$ is approximately regular, of degree $\rho (n-1)$, so the
equilibrium distribution of $W$ is approximately uniform.  The random walk $W$
`mixes rapidly', so most sequence terms have distributions close to the
equilibrium distribution, and $W$ visits each vertex around the same number of
times: approximately $\length n$ times.
 \item[(2)] For each vertex $v$ of $G$, the random walk leaves $v$ $\length n$ times,
so picks up a random set (chosen with replacement) of $\length n$ of the $\rho n$
edges at $v$.  Taking the union of these sets of edges gives a random subgraph
of $G$.
\end{enumerate}

While this simple plan seems quite plausible, there are two reasons why it is 
not easy to implement, both connected to (1) above. The first is 
related to what exactly it means to say that $W$ `mixes rapidly'.  Since 
quasirandomness does not say anything about small parts of the graph, $G$ might have small 
configurations of low degree vertices that can trap the random walk for long 
periods of time.  The second is that, even if we know the distribution of each 
$W_i$, these random variables are not independent for different $i$.

These difficulties can, however, be overcome, and an argument of the above form
can be used to show that the subgraph of $G$ spanned by $W$ will be quasirandom with high probability.

\begin{theorem} \label{thm:general-case}
 Given $\length, \rho, \eta >0$ there exists $\epsilon > 0$
 such that the following holds. Let $G$ be an $n$-vertex $\epsilon$-quasirandom
graph with $\rho \binom n 2$ edges, and let $W$ be a random walk on $G$ of length
$\length n^2$ starting at any vertex $W_0$ of $G$ with degree in
 $[(\rho - \epsilon)n,(\rho + \epsilon )n]$. Then, with  probability $1 - \Oe$,
the graph $\GW$ is $\eta $-quasirandom
 with $(1-e^{-2\length /\rho } + \Oe)\rho \binom n 2$ edges.
\end{theorem}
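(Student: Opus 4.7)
My plan is to realise the heuristic (1)--(2) from the introduction, with mixing and concentration as the two main technical ingredients.

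First, I use the Chung--Graham--Wilson equivalences to pass from $\epsilon$-quasirandomness to a spectral gap for the normalised transition matrix of $G$: all non-principal eigenvalues are $O(\sqrt\epsilon)$. Standard spectral estimates then give mixing time $T_{\mathrm{mix}}=O(\log n)$ to the (nearly uniform) stationary distribution. Applying a Bernstein-type concentration inequality for Markov chains (for example Lezaud's), I show that every vertex $v$ whose degree lies in $[(\rho-\delta)n,(\rho+\delta)n]$ is visited $T_v=\length n(1+o(1))$ times by $W$, with probability $1-\exp(-\Omega(\sqrt n))$. Quasirandomness implies that only $O(\epsilon n)$ vertices have degree outside this range, and their contribution to $\GW$ is negligible, so a union bound handles the visit counts of all ``good'' vertices simultaneously.

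For the edge probabilities, the cleanest viewpoint is the iid representation of $W$: pre-sample independent $X_{v,k}$ uniform on $N_G(v)$ for each $v$ and $k\geq 1$, and run the walk by consuming $X_{v,1}, X_{v,2},\ldots$ in the order of visits to $v$. Under this coupling, $uv\in\GW$ exactly when some $X_{v,k}=u$ with $k\leq T_v$ or some $X_{u,j}=v$ with $j\leq T_u$. Using the strong Markov property together with the concentration of $T_u$ and $T_v$, one obtains
\[
 \Pr[uv\notin\GW] = e^{-2\length/\rho}(1+o(1)),
\]
and summing over $uv\in E_G(A,B)$ together with $e_G(A,B)=\rho|A||B|(1+O(\epsilon))$ gives the expected count of edges of $\GW$ between $A$ and $B$; taking $A=B=V$ produces the stated total.

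The main obstacle is to concentrate $e_{\GW}(A,B)$ around its expectation uniformly over all pairs $(A,B)$ with $|A|,|B|\geq\eta n$. My strategy is to split $W$ into $K=\Theta(n^2/\log n)$ chunks of length $\Theta(\log n)$ each: mixing ensures that the endpoint of each chunk is within $1/\mathrm{poly}(n)$ of stationary, so the chunks may be coupled at negligible total-variation cost to a family of independent walks started from stationarity. Each chunk contributes $O(\log n)$ to $e_{\GW}(A,B)$, so a Bernstein-type inequality applied to the sum over chunks gives deviation probability $\exp(-\Omega(n^2/\mathrm{polylog}\,n))$ for each fixed pair $(A,B)$. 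This easily beats the $4^n$ union bound over all $(A,B)$, giving simultaneous $\eta$-quasirandomness with probability $1-O(\epsilon)$. Minor bookkeeping, such as the small double-counting of edges shared between adjacent chunks and the handling of ``bad'' vertices visited an atypical number of times, can be absorbed into the error terms.
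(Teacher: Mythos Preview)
Your proposal has a genuine gap at the very first step: the claim that $\epsilon$-quasirandomness yields a spectral gap of order $O(\sqrt\epsilon)$ for the \emph{normalised transition matrix} $P$ is false without a minimum-degree hypothesis. The Chung--Graham--Wilson equivalences control the eigenvalues of the adjacency matrix $A$, but $P$ is $A$ with each row scaled by $1/d(v)$, and this scaling can be arbitrarily large at low-degree vertices. The paper makes this explicit: take $G$ to be an $\epsilon^2 n/2$-clique joined by a single edge to a $(1-\epsilon^2/2)n$-clique. This graph is $\epsilon$-quasirandom, yet its second eigenvalue is essentially $1$, the mixing time is $\Theta(n^2)$, and indeed even $e(\GW)$ fails to concentrate (with probability bounded away from $0$ and $1$ the walk stays entirely in the big clique). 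This is precisely why the conclusion of the theorem is only ``with probability $1-o_\epsilon(1)$'' rather than $1-o(1)$, and why your claimed deviation bounds of the form $\exp(-\Omega(\sqrt n))$ cannot hold.

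Everything downstream in your plan---Lezaud-type concentration for visit counts, chunking into $\Theta(\log n)$ segments coupled to independent stationary walks, the $4^n$ union bound---relies on $T_{\mathrm{mix}}=O(\log n)$, so it all collapses. What you have sketched is, in effect, the paper's proof of the \emph{other} theorem (the one assuming minimum degree $\geq \gamma n$), where the spectral-gap lemma does hold. For the general case the paper abandons global mixing entirely: it shows only that from a balanced vertex the walk is well-spread over \emph{large} sets after just two steps, breaks $W$ into blocks of length $\omega(1)$, argues via Markov that most block roots are balanced, and then uses a union bound over multisets of roots to conclude that \emph{most} (not all) vertices are visited about the right number of times. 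The coupling to the list model then absorbs the remaining $o_\epsilon(1)$ discrepancy.
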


Here $o_\epsilon(1)$ means a quantity which is less than $f(\epsilon)$ for $n$
sufficiently large, for some $f(\epsilon)$ tending to zero with $\epsilon$. 
This dependence on $\epsilon$, and not just $n$, is necessary to deal with the
`bad small subgraph' problem described above.

If we also have a lower bound on the minimum degree of $G$ then we can say much
more: if we start with a graph that is $\epsilon$-quasirandom, then, with high
probability, $\GW$ will also be $\epsilon$-quasirandom.

\begin{theorem} \label{thm:bounded-minimum-degree}
 Let $\length, \epsilon , \rho  >0$ and let $\gamma
= C \epsilon ^{1/4}$ for some absolute constant
 $C>0$.  Let $G$ be an $n$-vertex $\epsilon $-quasirandom graph with $\rho
\binom n 2$ edges and minimum degree at least $\gamma n$, and let $W$ be a
random walk on $G$ of length $\length n^2$. Then, with probability $1 - o(1)$, the
graph $\GW$ is $\epsilon$-quasirandom with
 $(1-e^{-2\length /\rho } + o(1)) \rho \binom n 2$ edges.
\end{theorem}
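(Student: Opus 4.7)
My plan is to compute $\mathbb{E}[e_{\GW}(A,B)]$ to high accuracy for each pair of test sets $A,B$, prove a concentration inequality for $e_{\GW}(A,B)$, and then union-bound over all pairs. The minimum-degree assumption is crucial: it converts the $\epsilon$-quasirandomness of $G$ into \emph{uniform} control over the mixing of the random walk, which in turn lets us avoid the loss of constants present in Theorem~\ref{thm:general-case}.

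\textbf{Mixing and expectation.} First, use the expander-mixing-lemma-type correspondence between quasirandomness and spectral gap to bound the second eigenvalue of the simple random walk on $G$ by $O(\epsilon^{1/2}/\gamma) = O(\epsilon^{1/4})$, which is the source of the exponent $1/4$ in the statement. This gives a total-variation mixing time $t_{\mathrm{mix}} = O(\gamma^{-1}\log n)$ from \emph{every} starting vertex, so the walk cannot be trapped. At stationarity the walk traverses each edge of $G$ with probability $1/e(G)$ at each step; decoupling widely-separated steps via the mixing estimate, one obtains $\Pr[e \in \GW] = 1 - e^{-2\length/\rho} + O(\epsilon^{1/4})$ for every edge $e$. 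Summing over the $(\rho \pm \epsilon)|A||B|$ edges between $A$ and $B$ guaranteed by quasirandomness yields $\mathbb{E}[e_{\GW}(A,B)] = (1-e^{-2\length/\rho})\rho|A||B| \pm O(\epsilon)|A||B|$.

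\textbf{Concentration and union bound.} For the concentration step I would use a Doob martingale that exposes the walk one step at a time, combined with Azuma--Hoeffding. Altering a single step $W_i$ only perturbs the subsequent trajectory until it remixes, so the martingale differences have size $O(t_{\mathrm{mix}})$; with $\length n^2$ total steps this gives concentration around the mean with failure probability $\exp\bigl(-\Omega(\epsilon^2 n^2/(\log n)^2)\bigr)$, comfortably beating the $4^n$ union bound over pairs $(A,B)$.

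\textbf{Main obstacle.} The core difficulty is quantitative: the conclusion is $\epsilon$-quasirandomness for the \emph{same} $\epsilon$, not merely some weaker $\eta(\epsilon) \to 0$, so each estimate must be polynomially tight in $\epsilon$. The spectral bound costs one square-root loss, and the Azuma argument a further square-root loss; together these explain the hypothesis $\gamma = C\epsilon^{1/4}$. Ensuring that all accumulated errors stay below $\epsilon|A||B|/2$ simultaneously, with the correct dependence of constants, is where most of the real work must be done.
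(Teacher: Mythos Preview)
Your bookkeeping of the $\epsilon$-errors is where the argument breaks. You assert that the per-edge probability is $1-e^{-2\length/\rho}+O(\epsilon^{1/4})$ and then that summing gives $\mathbb{E}[e_{\GW}(A,B)]=(1-e^{-2\length/\rho})\rho|A||B|\pm O(\epsilon)|A||B|$; but summing an $O(\epsilon^{1/4})$ error over $\Theta(|A||B|)$ edges yields $O(\epsilon^{1/4})|A||B|$, which is far too large for $\epsilon$-quasirandomness. More fundamentally, you have misdiagnosed why the conclusion can be stated with the \emph{same} $\epsilon$. It is not a matter of keeping accumulated polynomial-in-$\epsilon$ losses below $\epsilon$; rather, the minimum-degree hypothesis $\gamma\geq C\epsilon^{1/4}$ is used \emph{only} to force a constant spectral gap ($\lambda\leq 1/2$, via the trace of $P^4$ and a $C_4$-count that produces an $O(\epsilon/\gamma^4)$ error term---this is the true origin of the exponent $1/4$). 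Once $\lambda\leq 1/2$, every mixing error is $o(1)$ in $n$, not $O(\epsilon^{\text{anything}})$. Then $e_{\GW}(A,B)=(1-e^{-2\length/\rho}+o(1))e_G(A,B)$, and combining with $|e_G(A,B)-\rho|A||B||<\epsilon|A||B|$ gives an error $(1-e^{-2\length/\rho}+o(1))\epsilon|A||B|<\epsilon|A||B|$: the factor $1-e^{-2\length/\rho}<1$ is what buys the ``same $\epsilon$'' conclusion for free.

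The paper's route also differs structurally from yours. Instead of computing edge-inclusion probabilities directly and running Azuma along the walk, it introduces an auxiliary \emph{list model} $\GL{\beta}$ (independently sample $\beta d(v)$ edges at each $v$), proves quasirandomness of $\GL{\beta}$ via Talagrand on the independent list entries, and then couples $\GW$ to $\GL{\length/\rho}$ by showing that the number of visits to every vertex is $(\length/\rho+o(1))d(v)$ with high probability (Chernoff on $L$-spaced subsequences, which are $o(1)$-close in total variation to i.i.d.\ draws from $\pi$). This sidesteps the need for a bounded-differences argument along the dependent walk; your proposed Azuma step with Lipschitz constant $O(t_{\mathrm{mix}})$ would require an additional coupling lemma (to show that perturbing one step changes the eventual edge set by only $O(t_{\mathrm{mix}})$ edges), which you have not supplied.
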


In \sect{sec:list} we define an explicit model for our random walks and show that step (2) above works straightforwardly in this setting.  In \sect{sec:bounded-mindeg} we carry out (1) for
the case where we have a bound on the minimum degree of $G$.  This proves \thm{thm:bounded-minimum-degree},
and illustrates why we get the weaker conclusion in \thm{thm:general-case}.  In \sect{sec:general-case} we
use a more elaborate argument to perform (1) in the general case, proving
\thm{thm:general-case}.

The problems considered in this paper were suggested by B\"ottcher, Hladk\'y,
Piguet and Taraz \cite{Hladky} after they encountered similar problems in connection with their
work on tree packing.  Suppose that we are trying to pack many trees into a copy
of $K_n$.  One approach is to embed some of the trees randomly.  If we succeed
in packing a small number of trees, then it would be good to know that the
subgraph consisting of unused edges has nice enough properties that we can
iterate the argument and therefore pack a much larger number of trees.  If $H$
is a subgraph of $G$, and both graphs are quasirandom, then $G-H$ is also
quasirandom.  So it would be useful to have a result like \thm{thm:general-case}, but for random
images of trees rather than paths.  We consider such a generalisation in \sect{sec:trees}.

Since we will only prove asymptotic results we make a number of simplifying assumptions.  We assume that $\epsilon$ is sufficiently small compared to the other parameters, and are only interested in statements for $n$ sufficiently large.  We omit notation indicating the taking of integer parts, and ignore questions of divisibility when breaking walks into pieces of a given size.

\section{The list model}
\label{sec:list}

We now define a third model of a random subgraph to act as a staging post
between $\GW$ and $\GP$.  The subgraph $\GL{\entries}$ of $G$ is obtained by
selecting $\entries d(v)$ edges at each vertex $v$ of $G$ to be retained, with
all choices made independently and with replacement.  We give a rather elaborate
formal definition in order to introduce some ideas which will be useful later.

For each $v \in V(G)$, let $L_v$ be an infinite list of uniform selections from
the neighbourhood of $v$, with all choices made independently.  The entry $u$ on
the list $L_v$ corresponds naturally to the edge $uv$ of $G$, and we define
\[
 \GL{\entries} = \bigcup_{v \in V(G)} \{uv : u \text{ appears in the first
$\entries d(v)$ entries of } L_v \}.
\]

In this section we will show that $\GL{\entries}$ is very close to $\GP$ for
some $p$, in the sense that large subgraphs have similar densities in each
model.  It will then follow that $\GL{\entries}$ is quasirandom with high
probability.

We first calculate the expected density of $\GL{\entries}$ in $G$.  The reader is encouraged to focus on the case where we have a bound on the minimum degree.

\begin{lemma} \label{lem:subgraph-density}
 Let $G$ be an $\epsilon$-quasirandom graph on $n$ vertices with $\density
\binom n 2$ edges, and let $A, B \subseteq V(G)$ with $|A|, |B| \geq
\epsilon^{0.99} n$.  Then
 \[
  \E{e_{\GL{\entries}}(A,B)} = (1 - e^{-2\entries} + \Oe)e_G(A,B).
 \]
 Moreover, if the minimum degree of $G$ is at least $\mindeg n$, then, for all
$A, B \subseteq V(G)$,
 \[
  \E{e_{\GL{\entries}}(A,B)} = (1 - e^{-2\entries} + o(1))e_G(A,B).
 \]

\end{lemma}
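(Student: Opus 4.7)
My plan is to compute the per-edge probability that $ab \in \GL{\entries}$ and then sum. By the independence of the lists $L_v$ for distinct $v$, for any edge $ab \in E(G)$,
\[
 \Pr[ab \in \GL{\entries}] = 1 - \left(1 - \frac{1}{d(a)}\right)^{\entries d(a)} \left(1 - \frac{1}{d(b)}\right)^{\entries d(b)},
\]
since $ab$ fails to appear precisely when $b$ is missing from the first $\entries d(a)$ entries of $L_a$ and, independently, $a$ is missing from the first $\entries d(b)$ entries of $L_b$. Linearity of expectation then expresses $\E{e_{\GL{\entries}}(A,B)}$ as a sum of these probabilities over the pairs $(a,b) \in A \times B$ with $ab \in E(G)$.

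Next I would invoke the elementary estimate $(1 - 1/d)^{\entries d} = e^{-\entries} + O(\entries/d)$ as $d \to \infty$. When $G$ has minimum degree at least $\mindeg n$, this approximation is uniform over all $v$, giving $\Pr[ab \in \GL{\entries}] = 1 - e^{-2\entries} + o(1)$ for every edge, which immediately yields the second statement.

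For the general statement I need to control vertices of small degree. Setting $T = \{v : d(v) \leq \epsilon^{0.99} n\}$ and applying $\epsilon$-quasirandomness to the pair $(T, V(G))$ forces $|T| < \epsilon n$, since otherwise $e_G(T, V(G)) \geq (\density - \epsilon)|T|n$ would contradict the trivial bound $e_G(T,V(G)) \leq |T| \cdot \epsilon^{0.99} n$. I would then split the sum over edges according to whether at least one endpoint lies in $T$: edges with both endpoints outside $T$ pick up the target probability $1 - e^{-2\entries}$ with error $o(1)$, while the number of remaining edges in $A \times B$ is at most $2|T| \cdot \epsilon^{0.99} n \leq 2\epsilon^{1.99} n^2$. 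Combined with the quasirandom lower bound $e_G(A,B) \geq (\density - \epsilon) \epsilon^{1.98} n^2$ (which uses the hypothesis $|A|, |B| \geq \epsilon^{0.99} n$), the bad-edge contribution is $O(\epsilon) \cdot e_G(A,B)$, comfortably $\Oe$.

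The main technical obstacle is this calibration: quasirandomness alone gives only $|T| \leq \epsilon n$, so to make the bad mass $|T| \tau$ small compared to $\density \epsilon^{1.98} n^2$ the degree threshold $\tau$ must be no larger than about $\epsilon^{0.99} n$. This is precisely why the hypothesis on $|A|, |B|$ is calibrated at $\epsilon^{0.99} n$ rather than at $\epsilon n$, and it is also the reason the minimum-degree version of the statement requires no lower bound on $|A|, |B|$. Everything else reduces to routine $o_\epsilon(1)$ bookkeeping.
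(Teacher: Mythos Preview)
Your proposal is correct and follows essentially the same approach as the paper: compute the per-edge retention probability, isolate the low-degree vertices, and use quasirandomness both to bound their number and to lower-bound $e_G(A,B)$. The paper sets the degree threshold at $(\rho-\epsilon)n$ (rather than your $\epsilon^{0.99}n$) and bounds the bad contribution by the trivial pair count $O(\epsilon n(|A|+|B|))$ rather than via the degrees in $T$; either route yields a relative error of order $\epsilon^{0.01}$ (so your ``$O(\epsilon)$'' should read $O(\epsilon^{0.01})$, though the $o_\epsilon(1)$ conclusion is unaffected).
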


The exact lower bound on the sizes of $A$ and $B$ in the general case is unimportant; any value asymptotically larger than $\epsilon$ would work equally well.

\begin{proof}
 Write $S = \{v \in V(G): d(v) \geq (\density - \epsilon) n \}$.  Then
 \[
  |e_G(V(G),S^c) - \density n |S^c|| > \epsilon n |S^c|,
 \]
 so, by $\epsilon$-quasirandomness, $|S^c| < \epsilon n$.

 The edge $uv$ of $G$ appears in $\GL{\entries}$ if and only if $u$ appears in
the first $\entries d(v)$ entries of $L_v$, or $v$ appears in the first
$\entries d(u)$ elements of $L_u$.  If $u, v \in S$, then the probability of
this occurring is
 \begin{equation*}
  1 - (1 - 1/d(v))^{\entries d(v)}(1 - 1/d(u))^{\entries d(u)} = 1 -
e^{-2\entries} + o(1),
 \end{equation*}
 since $d(v), d(u) \geq (\density - \epsilon) n$.  So
 \begin{align*}
  \E{e_{\GL{\entries}}(A,B)} & = (1 - e^{-2\entries} + o(1))e_G(A,B) +
O(\epsilon n (|A|+|B|)) \\
                             & = (1 - e^{-2\entries} + \Oe)e_G(A,B),
 \end{align*}
 since $e_G(A,B) \geq (\density - \epsilon) |A||B|$ and $|A|, |B| \geq
\epsilon^{0.99} n$.

 If $d(v) \geq \mindeg n$ for every $v \in V(G)$, then the probability of being
retained is $1 - e^{-2\entries} + o(1)$ for every edge of $G$, so
 \[
  \E{e_{\GL{\entries}}(A,B)} = (1 - e^{-2\entries} + o(1))e_G(A,B). \qedhere
 \]
\end{proof}

To show that the number of edges retained in any subgraph is close to its
expectation we use Talagrand's concentration inequality 
\cite{Talagrand}. In its usual form Talagrand's inequality is asymmetric and
bounds a random variable in terms of its median. We
use the following symmetric version (see \cite[Chapter 20]{Molloy-Reed}) that
gives concentration of the random variable about its mean.\footnote{We note that the proof in \cite{Molloy-Reed} is incorrect, as a simplified form of the failure probability valid for a small deviation $t$ is used when $t$ is large.  However, when $t$ is large, a different simplification can be used instead at the cost of (a) increasing the constants slightly and (b) imposing a very mild additional condition on the random variable $X$.}

\begin{theorem}[Talagrand's inequality] \label{thm:talagrand}
Let $\Omega = \prod _{i=1} ^N \Omega _i$ be a product of probability spaces with
the product measure.  Let $X$ be a random variable on 
$\Omega$ such that
\begin{enumerate}
 \item[(i)] $|X(\omega) - X(\omega')| \leq c$ whenever $\omega$ and $\omega'$
differ on only a single coordinate for some constant $c>0$;
 \item[(ii)] whenever $X(\omega) \geq r$ there is a set $I \subseteq \{1,
\dotsc, N\}$ with $|I| = r$ such that $X(\omega') \geq r$ for all
$\omega'\in\Omega$ with $\omega_i' = \omega_i$ for all $i \in I$;
\end{enumerate}
 and suppose that the median of $X$ is at least $100c^2$.  Then, for $0 \leq s \leq \E{X}$,
\begin{equation*}
 \Pr{|X - \E{X}| \geq s + 90c\sqrt{\E{X}}} \leq 4 e^{- s^2 / 8 c^2\E{X}}.
\end{equation*}
\end{theorem}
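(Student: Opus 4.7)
The plan is to derive this symmetric, mean-based form from the standard one-sided, median-based version of Talagrand's inequality. The latter asserts that under (i) and (ii), for every $t \geq 0$,
\[
\Pr{X \leq m - t} \leq 2\exp\!\left(-\frac{t^2}{4c^2 m}\right), \qquad \Pr{X \geq m + t} \leq 2\exp\!\left(-\frac{t^2}{4c^2 (m+t)}\right),
\]
where $m$ is the median of $X$. The strategy has three steps: control the gap between $\E{X}$ and $m$; translate a deviation from the mean into a deviation from the median; and apply the one-sided bounds.

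For the first step, I would integrate the tail bounds against $t$ to obtain $|\E{X} - m| \leq K c \sqrt{m}$ for some absolute constant $K$. The hypothesis $m \geq 100c^{2}$ guarantees that the integrand is genuinely sub-Gaussian on the relevant range, so $\sqrt{m}$ and $\sqrt{\E{X}}$ differ by only $O(c)$. Consequently the event $\{|X - \E{X}| \geq s + 90c\sqrt{\E{X}}\}$ is contained in $\{|X - m| \geq s + \beta c \sqrt{\E{X}}\}$ for some $\beta$ that can be taken close to $90 - K$.

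For the third step, apply each one-sided bound with $t = s + \beta c \sqrt{\E{X}}$ and sum the two tails. The hypothesis $s \leq \E{X}$ together with $m = \E{X}(1 + o(1))$ gives $m + t \leq 2\E{X}(1 + o(1))$, so $4c^{2}(m + t) \leq 8 c^{2} \E{X}$ up to a negligible factor. The numerator satisfies $(s + \beta c \sqrt{\E{X}})^{2} \geq s^{2}$, and any residual slack introduced when relaxing the denominator is more than absorbed by the cross term $2 \beta c s \sqrt{\E{X}}$ appearing upon expansion. This yields the stated bound $4 e^{-s^{2}/8c^{2}\E{X}}$.

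The main obstacle is precisely the point flagged in the authors' footnote: once $t$ becomes comparable to or larger than $m$, the denominator $m + t$ in the exponent cannot be replaced by a constant multiple of $m$ alone, so the simplification used for small deviations in \cite{Molloy-Reed} fails in the large-$s$ regime. The restriction $s \leq \E{X}$ together with the generous $O(\sqrt{\E{X}})$ cushion built into $90 c \sqrt{\E{X}}$ is exactly what keeps $t = O(\E{X})$, so that $m + t$ is controlled by a constant multiple of $\E{X}$ and a clean Gaussian tail survives.
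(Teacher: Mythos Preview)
The paper does not actually prove this theorem: it is quoted from \cite{Molloy-Reed}, and the only argument the paper supplies is the footnote observing that the Molloy--Reed derivation breaks down for large deviations and can be repaired ``at the cost of (a) increasing the constants slightly and (b) imposing a very mild additional condition on the random variable $X$.'' Your proposal is precisely a fleshed-out version of that repair: you start from the median-based one-sided bounds, convert median to mean by integrating the tails (using $m \geq 100c^{2}$ to make this clean), and then exploit the restriction $s \leq \E{X}$ to keep $m+t = O(\E{X})$ so that the upper-tail denominator stays under control. This is exactly the mechanism the footnote alludes to, and your identification of the failure mode in \cite{Molloy-Reed} matches the authors' diagnosis. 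So while there is no detailed proof in the paper to compare against, your sketch is the intended argument and is sound at the level of detail given.
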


\begin{lemma} \label{lem:subgraph-concentration}
 Let $G$ be an $n$-vertex $\epsilon$-quasirandom graph, and fix $\entries > 0$. 
Then, with probability $1-o(1)$,
 \[
  e_{\GL{\entries}}(A,B) = (1 - e^{-2\entries} + \Oe)e_G(A,B),
 \]
 for all  $A, B \subseteq V(G)$ with $|A|, |B| \geq \epsilon^{0.99} n$.
Moreover, if the minimum degree of $G$ is at least $\mindeg n$, then the same
result holds for $|A|, |B| \geq \epsilon n$, with $\Oe$ replaced by $o(1)$.
\end{lemma}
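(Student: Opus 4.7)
The plan is to apply Talagrand's inequality (Theorem~\ref{thm:talagrand}) to $X = e_{\GL{\entries}}(A,B)$ for a fixed pair $(A,B)$, and then take a union bound over the (at most $4^n$) choices of $(A,B)$. The underlying product space consists of the first $\entries d(v)$ entries of each list $L_v$ (each entry an independent uniform neighbour of $v$), totalling $\sum_v \entries d(v) = O(n^2)$ coordinates.

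Verifying Talagrand's hypotheses: for (i), changing one coordinate (a single entry of some $L_v$) can alter the status of at most two edges of $\GL{\entries}$ (the edges $vu$ and $vu'$ corresponding to the old and new entries), and each edge contributes at most $2$ to $e_{\GL{\entries}}(A,B)$ (once per ordering of its endpoints in $A\times B$), so $X$ changes by at most $c=4$. For (ii), if $X(\omega)\ge r$, then for each of the $r$ ordered pairs $(a,b)\in A\times B$ with $ab\in \GL{\entries}$ we can designate a single list entry (of $L_a$ or $L_b$) that witnesses $ab\in\GL{\entries}$; collecting these indices gives at most $r$ coordinates, and padding with arbitrary indices yields a set $I$ of size exactly $r$ on which $X$ is already certified to be $\ge r$. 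By Lemma~\ref{lem:subgraph-density} together with $e_G(A,B)\ge (\density-\epsilon)|A||B|$, we have $\E{X}=\Theta_\epsilon(n^2)$, so the median is easily at least $100c^2$ for $n$ large.

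We then choose $s$ equal to a small multiple of $e_G(A,B)$, with the multiplier being a function of $\epsilon$ that tends to $0$ as $\epsilon\to 0$. Since $\E{X}=O(n^2)$, the additive correction $90c\sqrt{\E{X}}=O(n)$ is dwarfed by $s=\Omega_\epsilon(n^2)$, so the Talagrand deviation $s+90c\sqrt{\E{X}}$ is at most $\Oe\cdot e_G(A,B)$, and Talagrand gives failure probability at most $4e^{-s^2/(8c^2\E{X})}=e^{-\Omega_\epsilon(n^2)}$. Combined with Lemma~\ref{lem:subgraph-density} and a union bound over the at most $4^n=e^{O(n)}$ pairs $(A,B)$, this gives the desired conclusion with probability $1-o(1)$. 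The minimum-degree case is identical: the threshold $|A|,|B|\ge\epsilon n$ now suffices for quasirandomness to control $e_G(A,B)$ directly, and the ratio $s/e_G(A,B)$ is allowed to decay slowly with $n$, giving $o(1)$ in place of $\Oe$. The main delicacy I anticipate is in condition (ii) in the presence of possible duplicate list entries, and in tuning $s$ so that the $s+90c\sqrt{\E{X}}$ from Talagrand collapses cleanly into the target error $\Oe\cdot e_G(A,B)$ at all scales of $|A|,|B|$ down to $\epsilon^{0.99}n$.
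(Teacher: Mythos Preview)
Your proposal is correct and follows essentially the same route as the paper: apply Talagrand's inequality to $X_{A,B}=e_{\GL{\entries}}(A,B)$ on the product space of list entries, then union-bound over the at most $4^n$ pairs $(A,B)$. The only cosmetic differences are that the paper observes $c=2$ already suffices (a single entry change removes at most one edge and adds at most one, so the net effect on the ordered-pair count lies in $[-2,2]$), and the paper takes the deviation $t=C'\sqrt{n\,\E{X_{A,B}}}$ just large enough to yield failure probability $8^{-n}$, so that $t=o(\E{X_{A,B}})$ and the concentration error is absorbed entirely into the $\Oe$ already coming from Lemma~\ref{lem:subgraph-density}; your choice of $s$ as a vanishing multiple of $e_G(A,B)$ works equally well.
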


\begin{proof} 
We apply \thm{thm:talagrand} to the space $\Omega = \prod_{v \in V(G)}
\prod_{i=1}^{\entries d(v)} N(v)$, where each
neighbourhood has the uniform probability measure; we can view $\Omega$ as the
space of choices for the first $\entries d(v)$ entries of each
list $L_v$.  For $A, B \subseteq V(G)$ with $|A|,|B| \geq \epsilon^{0.99} n$,
let $X_{A,B} = e_{\GL{\entries}}(A,B)$. It is easy to
see that $X_{A,B}$ satisfies the conditions of Talagrand's inequality. Indeed,
(i) holds since changing a list entry can
change $X_{A,B}$ by at most $c=2$. Furthermore, (ii) holds since, if $X_{A,B}
\geq s$, then there are $s$ list entries
witnessing this fact.  Finally, the technical condition on the median holds for large $n$, since the probability that $X_{A,B}$ is less than any constant is $o(1)$.  Therefore, by \thm{thm:talagrand}, for
$180\sqrt{\E{X_{A,B}}} \leq t \leq \E{X_{A,B}}$ and large $n$
we have
\begin{align*}
 \Pr{\abs{X_{A,B} - \E{X_{A,B}}} \geq 2t} \leq 4 e^{-t^2/ 32\E{X_{A,B}}}.
\end{align*}
By \lem{lem:subgraph-density} we have $\E{X_{A,B}} = (1 - e^{-2\entries } +
\Oe)e_G(A,B)$.  Since $e_G(A,B) \geq (\density - \epsilon) \epsilon^{1.98}n^2$, taking
$t = C' \sqrt {n\E{X_{A,B}}}$ $(=o(\E{X_{A,B}}))$ for large enough $C'>0$ gives
that
\begin{equation*}
 \Pr{\abs{X_{A,B} - (1 - e^{- 2 \entries} + \Oe)e_G(A,B)} \geq 2t} \leq 8^{-n}.
\end{equation*}
But there are at most $2^n$ choices for $A$ and $2^n$ choices for $B$.
Therefore, with probability at least 
$1 - 2^{-n}$, we have that $X_{A,B} = (1 - e^{- 2 \entries} + \Oe)e_G(A,B)$, for
all pairs $(A,B)$ with $|A|,|B| \geq \epsilon^{0.99} n$.  The `moreover'
statement is proved identically, using the `moreover' statement from
\lem{lem:subgraph-density}.
\end{proof}

This is enough to ensure that $\GL{\entries}$ is quasirandom with high
probability.

\begin{theorem} \label{thm:list-quasirandomness}
 Let $\entries, \mindeg > 0$ and let $G$ be an $n$-vertex $\epsilon$-quasirandom graph with $\density \binom n 2$ edges.  Then, with probability $1-o(1)$, $\GL{\entries}$
is $\Oe$-quasirandom.  Moreover, if the minimum degree of $G$ is at least
$\mindeg n$, then, with probability $1-o(1)$, $\GL{\entries}$ is
$\epsilon$-quasirandom.
\end{theorem}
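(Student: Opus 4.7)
My plan is to derive this as a direct consequence of Lemma~\ref{lem:subgraph-concentration} together with the $\epsilon$-quasirandomness of $G$; the concentration work has already been done via Talagrand, so this step is essentially an assembly.

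I would first condition on the $1-o(1)$ event of Lemma~\ref{lem:subgraph-concentration}: for all $A, B \subseteq V(G)$ with $|A|, |B| \geq \epsilon^{0.99}n$,
\[
 e_{\GL{\entries}}(A,B) = (1 - e^{-2\entries} + \Oe)\,e_G(A,B).
\]
Applying this to $A=B=V(G)$ identifies the density $\rho'$ of $\GL{\entries}$ as $(1-e^{-2\entries}+\Oe)\density$. For a general pair $(A,B)$ in the range above, I would substitute the $\epsilon$-quasirandom estimate $e_G(A,B) = \density|A||B| + O(\epsilon|A||B|)$ into the displayed equation and absorb the cross terms (using $1-e^{-2\entries} \leq 1$ and $e_G(A,B) \leq |A||B|$) to obtain
\[
 e_{\GL{\entries}}(A,B) = \rho'|A||B| + \Oe \cdot |A||B|.
\]
Since the threshold $\epsilon^{0.99}$ is itself $\Oe$, this matches the definition of $\Oe$-quasirandomness.

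The \emph{moreover} statement follows by running the same argument with the minimum-degree version of Lemma~\ref{lem:subgraph-concentration}. That version supplies a multiplicative $o(1)$ error valid for all $|A|,|B| \geq \epsilon n$; combined with the $\epsilon|A||B|$ error from quasirandomness of $G$ and the prefactor $1-e^{-2\entries}<1$, the total deviation from $\rho'|A||B|$ is at most $(1-e^{-2\entries})\epsilon|A||B| + o(1)|A||B|$, which is strictly less than $\epsilon|A||B|$ for $n$ sufficiently large (the slack coming from $1-e^{-2\entries}<1$ is enough to absorb the $o(1)$ term). There is no substantive obstacle here; the only point requiring some care is that the random quantity $\rho'$ is itself close to $(1-e^{-2\entries})\density$ with high probability, which is exactly what the $A=B=V(G)$ instance of the concentration lemma delivers, so that the error $\Oe\cdot|A||B|$ is measured around the correct quantity rather than around a random target.
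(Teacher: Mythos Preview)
Your proposal is correct and follows essentially the same approach as the paper: condition on the high-probability event of Lemma~\ref{lem:subgraph-concentration}, combine with the $\epsilon$-quasirandomness of $G$ via the triangle inequality, and note that the threshold $\epsilon^{0.99}$ is itself $\Oe$ (and that the factor $1-e^{-2\entries}<1$ leaves room to absorb the $o(1)$ in the minimum-degree case). You are in fact slightly more careful than the paper in distinguishing the random density $\rho'$ of $\GL{\entries}$ from its target $(1-e^{-2\entries})\density$, but this is a stylistic refinement rather than a different argument.
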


\begin{proof}
 By \lem{lem:subgraph-density}, with probability $1-o(1)$,
 \[
  |e_{\GL{\entries}}(A,B) - (1 - e^{-2\entries})e_G(A,B)| = \Oe e_G(A,B),
 \]
 for all $A, B$ with $|A|, |B| \geq \epsilon^{0.99} n$.  By the definition of
quasirandomness, we also have that
 \[
  |e_G(A,B) - \density |A||B|| < \epsilon |A||B|.
 \]
 So by the triangle inequality,
 \begin{align*}
  |e_{\GL{\entries}}(A,B) - (1 - e^{-2\entries}) \density |A||B|| & < \Oe
e_G(A,B) + (1 - e^{-2\entries}) \epsilon |A||B| \\
                                                                  & \leq (\Oe +
(1 - e^{-2\entries}) \epsilon) |A||B| \\
                                                                  & = \Oe
|A||B|.
 \end{align*}
 Hence $\GL{\entries}$ is $\delta$-quasirandom with $\delta = \max (\epsilon^{0.99},
\Oe)$, where the $\Oe$ is taken from the last line.  Since $(1 - e^{-2\entries})
\epsilon < \epsilon$, the `moreover' statement follows from the `moreover'
statement of \lem{lem:subgraph-density}.
\end{proof}

Having shown that $\GL{\entries}$ is quasirandom with high probability, it
suffices to show that $\GW$ is close to $\GL{\entries}$ for some $\entries$. 
The construction of the random walk $W$ requires, at each visit to a vertex $v$,
a choice of a random neighbour of $v$.  We obtain a coupling of $\GW$ and
$\GL{\entries}$ by, at the $j$th visit to $v$, taking this choice to be the
$j$th entry of the list $L_v$.  Then $\GW$ and $\GL{\entries}$ both consist of
the edges corresponding to some initial segments of the lists $L_v$, and it is
enough to show that we can choose $\entries$ such that the lengths of these
initial segments are similar: that is, that the number of times the random walk
$W$ visits each vertex of $G$ is roughly proportional to its degree.

We give two arguments.  The first, appearing in \sect{sec:bounded-mindeg},
applies when we have a good lower bound on the minimum degree of $G$.  The
second, appearing in \sect{sec:general-case}, applies to a general quasirandom
graph $G$, but necessarily gives a weaker result.  We include the argument for
the special case where $G$ has large minimum degree for two reasons.  First, it
proves the stronger result that there is no loss of quasirandomness
when we pass from $G$ to $\GW$, which could be useful for some applications. 
Second, it illustrates why the natural approach cannot work in general,
justifying the use of a more technical argument in \sect{sec:general-case}.

\section{Bounded minimum degree}
\label{sec:bounded-mindeg}

To begin this section we recall some useful facts. A random walk $W$ on a graph $G$ is
a Markov chain with transition matrix $P$ given by
\begin{equation*}
 P_{uv} = \begin{cases}
           1/d(u) & \text{if } uv \in E(G); \\
           0      & \text{if } uv \not\in E(G).
          \end{cases}
\end{equation*}
Thus $P$ is a normalised version of the adjacency matrix $A$, where each row has been scaled by the 
degree of the corresponding vertex. The eigenvalues of $P$ are all real;
let these be $\lambda_1 \geq \lambda_2 \geq \cdots \geq \lambda_n$ and write $\lambda =
\max(\abs{\lambda_2},\abs{\lambda_n})$. The first eigenvalue $\lambda_1$ of $P$ is always equal 
to $1$ and has a corresponding eigenvector $\pi = (\pi _v)$ given by $\pi_v = \frac{d(v)}{2e(G)}$. 
This vector $\pi$ is called the \defn{stationary distribution} of the walk $W$. 
If $G$ is disconnected, then $\lambda_2 = 1$;  if $G$ is bipartite, then $\lambda_n = -1$.  If $G$ is connected and non-bipartite, then $\lambda < 1$ and the walk $W$ is irreducible and aperiodic.  Later in this section, $G$ will be $\epsilon$-quasirandom and have minimum degree at least $\gamma n$ with $\gamma > \epsilon$.  It will therefore be both connected (as the minimum degree condition forces every component to have size greater than $\epsilon n$, and components have no edges between them) and non-bipartite (else the larger class would have no edges to itself).  In this case it is well-known (for  
example, see \cite[Theorem 4.9]{Levin-Peres-Wilmer}) 
that, for any initial distribution of $W_0$, 
the distribution of $W_i$ converges to $\pi$ as $i \to \infty$ (i.e. $\Pr{W_i = v} \to \pi_v$ as $i \to \infty$ for each
$v$).

The following standard result, which can be read out of Jerrum and Sinclair
\cite{Jerrum-Sinclair}, gives control on the \emph{rate} of this convergence.

\begin{lemma}\label{lem:exponential-decay}
 For any $n$-vertex graph $G$ with minimum degree at least $\mindeg n$, and any initial distribution of $W_0$, we
 have
\begin{equation*}
 \max_{v \in V(G)} |\Pr{W_i = v} - \pi_v| \leq c_\mindeg \lambda^i,
\end{equation*}
 for some $c_\mindeg$ depending on $\mindeg$.
\end{lemma}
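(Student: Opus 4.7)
The plan is to diagonalise the transition matrix $P$ and control $P^i$ via its spectrum. The complication is that $P$ is not symmetric, so I would first symmetrise by conjugation. Writing $D = \mathrm{diag}(d(v))_{v \in V(G)}$, the matrix $S := D^{1/2} P D^{-1/2} = D^{-1/2} A D^{-1/2}$ is symmetric and has the same eigenvalues as $P$. Take an orthonormal basis of eigenvectors $\phi_1, \ldots, \phi_n$ of $S$, ordered so that $\lambda_1 \geq \cdots \geq \lambda_n$; one checks directly that $\phi_1$ is a positive multiple of $D^{1/2}\mathbf{1}$ with $\lambda_1 = 1$, and the spectral gap $1 - \lambda$ is strictly positive because $G$ is connected and non-bipartite.

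Next I would decompose the deviation from stationarity. Viewing distributions as row vectors, let $p^{(i)}$ denote the distribution of $W_i$, so that $p^{(i)} = p^{(0)} P^i$, and set $r = p^{(0)} - \pi$, which satisfies $r\mathbf{1} = 0$. Expand $rD^{-1/2} = \sum_j \beta_j \phi_j^T$. Taking the inner product with $\phi_1 \propto D^{1/2}\mathbf{1}$ gives $\beta_1 \propto r\mathbf{1} = 0$. Using $P^i = D^{-1/2} S^i D^{1/2}$ together with $\pi P = \pi$, we get
\[
 p^{(i)} - \pi = rP^i = \sum_{j \geq 2} \beta_j \lambda_j^i \, \phi_j^T D^{1/2},
\]
so the $v$-coordinate equals $\sqrt{d(v)}\,\sum_{j \geq 2} \beta_j \lambda_j^i (\phi_j)_v$.

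Finally I would bound this coordinatewise. Cauchy--Schwarz combined with the orthonormality identity $\sum_j (\phi_j)_v^2 = 1$ yields
\[
 \bigl|(p^{(i)} - \pi)_v\bigr|^2 \leq d(v)\,\lambda^{2i}\,\|rD^{-1/2}\|_2^2.
\]
The minimum-degree hypothesis and the bound $\|r\|_1 \leq 2$ for signed differences of probability measures give $\|rD^{-1/2}\|_2^2 = \sum_v r_v^2/d(v) \leq \|r\|_2^2/(\mindeg n) \leq 4/(\mindeg n)$. Combining with $d(v) \leq n$ yields the lemma with $c_\mindeg = 2/\sqrt{\mindeg}$.

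The main obstacle is translating the natural $L^2$ spectral estimate into the $L^\infty$ statement the lemma demands. The minimum-degree hypothesis is exactly what bridges this gap: it allows $\|rD^{-1/2}\|_2$ to absorb a factor of $1/\sqrt{n}$ into a constant depending only on $\mindeg$, and (as the remarks preceding the lemma ensure) it forces $\lambda$ away from $1$. Without a minimum-degree bound, small low-degree substructures can make $\lambda$ arbitrarily close to $1$, which foreshadows why the general case requires a separate treatment.
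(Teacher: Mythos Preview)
The paper does not actually prove this lemma: it is stated as a standard result ``which can be read out of Jerrum and Sinclair'' and no argument is given. Your spectral proof via the symmetrisation $S = D^{-1/2}AD^{-1/2}$ is precisely the classical argument behind that citation, and the details are correct (including the passage from $\|r\|_1 \le 2$ to $\|r\|_2^2 \le 4$, since $\|\cdot\|_2 \le \|\cdot\|_1$). Your explicit constant $c_\mindeg = 2/\sqrt{\mindeg}$ is a pleasant bonus the paper does not record. One small remark: your aside that $\lambda < 1$ because $G$ is connected and non-bipartite is not part of the lemma's hypotheses, but this does not affect the proof---the bound $c_\mindeg \lambda^i$ is simply vacuous when $\lambda = 1$, and the paper supplies connectedness and non-bipartiteness separately in the surrounding discussion.
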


If $G$ is a regular $\epsilon$-quasirandom graph then 
$\lambda$ is small on the scale of $\epsilon$.  (This is because the `spectral gap' of a quasirandom graph is large \cite{Chung-Graham-Wilson}, and $P$ is a scalar multiple of $A$ when $G$ is regular.)  For a general $\epsilon$-quasirandom graph
this need not be true: for example, if $G$ contains a 
small connected component, then $\lambda = 1$ (the $1$-eigenspace is spanned 
by the stationary distributions of each connected component of $G$). Similarly, $\lambda$ 
can be very close to $1$ if there is a small set of vertices that is only weakly connected to the rest 
of the graph. However, a lower bound on the minimum degree of $G$ is enough to recover an upper bound on $\lambda$.

\begin{lemma}\label{lem:spectral-gap}
 Let $G$ be an $n$-vertex $\epsilon$-quasirandom graph with $\density \binom n 2$ edges and minimum degree at least $\mindeg n$, where
 $\mindeg \geq C\epsilon^{1/4}$ for some absolute constant $C>0$.  Then, for $n$
 sufficiently large, $\lambda \leq 1/2$.
\end{lemma}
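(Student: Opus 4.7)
The plan is to control $\lambda$ via the trace of the fourth power of the symmetric matrix $M = D^{-1/2}AD^{-1/2}$, which is similar to $P$ (via $P = D^{-1/2}MD^{1/2}$) and hence shares its spectrum. Since the leading eigenvalue of $M$ is $1$,
\[
 \mathrm{tr}(M^4) = 1 + \sum_{i \geq 2}\lambda_i^4 \geq 1 + \lambda^4,
\]
so it suffices to show $\mathrm{tr}(M^4) \leq 1 + 1/16$. Combinatorially,
\[
 \mathrm{tr}(M^4) = \sum_{(v_0,v_1,v_2,v_3)} \frac{A_{v_0 v_1}A_{v_1 v_2}A_{v_2 v_3}A_{v_3 v_0}}{d(v_0)d(v_1)d(v_2)d(v_3)},
\]
a weighted count of closed $4$-walks in $G$.

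First I would bound the total number $W_4$ of closed $4$-walks. The identity $W_4 = 8\#C_4(G) + 2\sum_v d(v)^2 - 2e(G)$ reduces this to two standard consequences of quasirandomness: the degree-variance bound $\sum_v (d(v)-\density n)^2 = O(\epsilon) n^3$ (which gives $\sum_v d(v)^2 = \density^2 n^3 + O(\epsilon)n^3$), and the codegree estimate $\sum_{u,v}(|N(u) \cap N(v)| - \density^2 n)^2 = O(\epsilon)n^4$ (which gives $\#C_4(G) \leq \density^4 n^4/8 + O(\epsilon)n^4$ by expressing $\#C_4$ in terms of codegrees). Using $\density \geq \mindeg \geq C\epsilon^{1/4}$, this produces $W_4 \leq \density^4 n^4(1 + O(1/C^4))$.

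Second, I would split the trace by whether a walk is \emph{typical}, meaning each of its four vertices has degree in $[(\density-\epsilon)n,(\density+\epsilon)n]$. Quasirandomness applied to the sets of vertices of anomalously high (respectively low) degree shows that at most $2\epsilon n$ vertices are atypical. For typical walks each factor $1/d(v_i)$ equals $(1+O(\epsilon/\density))/(\density n)$, so the contribution from these walks is at most $W_4(1+O(\epsilon/\density))/(\density n)^4 \leq 1 + O(1/C^4) + o(1)$. For walks containing at least one atypical vertex there are at most $8\epsilon n \cdot n^3$ choices (pick the atypical slot and its value, then the other three vertices freely), each contributing at most $1/(\mindeg n)^4 = 1/(C^4\epsilon n^4)$, for a total contribution of $O(1/C^4)$. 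Combining, $\mathrm{tr}(M^4) \leq 1 + O(1/C^4) + o(1) \leq 1 + 1/16$ for $C$ sufficiently large.

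The main obstacle is the codegree estimate $\sum_{u,v}(|N(u)\cap N(v)|-\density^2 n)^2 = O(\epsilon)n^4$, because the discrepancy hypothesis only applies to sets of size at least $\epsilon n$. For each $v$ with $d(v) \geq \epsilon n$ one applies the discrepancy bound to the pair $(\{u : |N(u)\cap N(v)| > \density|N(v)|+t\}, N(v))$ to show that this tail set has size less than $\epsilon n$ whenever $t > \epsilon |N(v)|$, and analogously for the lower tail; vertices $v$ with $d(v) < \epsilon n$ contribute at most $O(\epsilon n) \cdot O(n^3) = O(\epsilon n^4)$ and are handled by trivial size bounds.
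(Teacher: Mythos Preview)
Your proposal is correct and follows essentially the same route as the paper: bound $\lambda^4$ via $\mathrm{tr}(P^4)$ (your $M$ is similar to $P$, so this is identical), interpret the trace as a weighted count of closed $4$-walks, use quasirandomness through codegrees to show the unweighted count is $(1+O(\epsilon/\density^4))\density^4 n^4$, and split off the contribution from walks containing an unbalanced vertex using the minimum-degree lower bound on the weights. The only organisational difference is that the paper separates degenerate closed walks (those that are not $4$-cycles) from genuine $C_4$'s and bounds them as $O(n^3)/(\mindeg n)^4$, whereas you fold them into $W_4$ via the identity $W_4 = 8\#C_4 + 2\sum_v d(v)^2 - 2e(G)$; the arithmetic comes out the same.
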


Before proving \lem{lem:spectral-gap}, we note the following simple observation about quasirandom graphs which we
will use repeatedly. 

\begin{proposition} \label{prop:balanced}
 Let $G$ be an $n$-vertex $\epsilon$-quasirandom graph with $\density \binom n 2$ edges, and let $X$ be a set of vertices with $|X| \geq \epsilon n$.
 Let $Y = \{v \in V(G) : \abs{e_G(v, X) - \density |X|} \geq \epsilon |X|\}$.  Then 
 $|Y| < 2 \epsilon n$.
\end{proposition}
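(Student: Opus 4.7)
The plan is to split $Y$ into its two one-sided parts according to the direction of the imbalance and apply quasirandomness to each separately, obtaining a contradiction if either side were too large.

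Concretely, I would set
\[
 Y^+ = \{v : e_G(v,X) \geq \rho|X| + \epsilon|X|\}, \qquad Y^- = \{v : e_G(v,X) \leq \rho|X| - \epsilon|X|\},
\]
so that $Y = Y^+ \cup Y^-$. Summing the lower bound in the definition of $Y^+$ over $v \in Y^+$ gives
\[
 e_G(Y^+, X) \geq (\rho + \epsilon)\,|Y^+|\,|X|,
\]
and similarly $e_G(Y^-, X) \leq (\rho - \epsilon)\,|Y^-|\,|X|$. In both cases, $|e_G(Y^\pm, X) - \rho|Y^\pm||X|| \geq \epsilon|Y^\pm||X|$.

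Now if $|Y^+| \geq \epsilon n$ (respectively $|Y^-| \geq \epsilon n$), then the pair $(Y^\pm, X)$ satisfies the hypothesis $|Y^\pm|, |X| \geq \epsilon n$ of the $\epsilon$-quasirandomness condition, which would force the opposite (strict) inequality. This contradicts the bound above, so we must have $|Y^+|, |Y^-| < \epsilon n$, and hence $|Y| < 2\epsilon n$.

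There is no real obstacle here; the whole content of the argument is the standard observation that one-sided imbalance is ruled out by two-sided quasirandomness once the imbalanced set is large enough to be a legal test set. The only point to watch is handling the two directions separately, so that the sign of the deviation is consistent across $Y^+$ (and across $Y^-$) when summing over vertices.
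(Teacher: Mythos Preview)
Your proof is correct and essentially identical to the paper's: both split $Y$ into $Y^+$ and $Y^-$, observe that summing the one-sided deviations gives $|e_G(X,Y^\pm)-\rho|X||Y^\pm||\geq \epsilon|X||Y^\pm|$, and then use $\epsilon$-quasirandomness together with $|X|\geq \epsilon n$ to force $|Y^+|,|Y^-|<\epsilon n$.
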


\begin{proof}
 We have $Y = Y^+\cup Y^-$ where
 \begin{align*}
  Y^+ & = \{v \in V(G) : e_G(v, X) \geq \density |X| + \epsilon |X|\}, \\
  Y^- & = \{v \in V(G) : e_G(v, X) \leq \density |X| - \epsilon |X|\}.
 \end{align*}
 Clearly
 \begin{gather*}
  \abs{e_G(X, Y^+) - \density \left|X\right| \left|Y^+\right|} \geq \epsilon \left|X\right| \left|Y^+\right|, \\
  \abs{e_G(X, Y^-) - \density \left|X\right| \left|Y^-\right|} \geq \epsilon \left|X\right| \left|Y^-\right|.
 \end{gather*}
 But then, since $G$ is $\epsilon$-quasirandom and $|X| \geq \epsilon n$, we must have $|Y^+|, |Y^-| < \epsilon n$.
\end{proof}

In particular, taking $X = V(G)$ there are at least $(1-2\epsilon)n$ vertices $v$ of $G$ with $|d(v) - \rho n| \leq \epsilon n$. 
We will call such vertices \defn{balanced}.

\begin{proof}[Proof of \lem{lem:spectral-gap}]
The proof follows a well-known argument (see for example \cite{Chung-Graham-Wilson}). We first estimate the number of labelled 
copies of $C_4$ in $G$, and then evaluate the trace of $P^4$ in two different ways.  Note that the implicit constants in our use of $O(\cdot)$ notation here are absolute (i.e. independent of $\epsilon$, $\density$, $\mindeg$ and $\lambda$).

The number of labelled copies of $C_4$ in $G$ is
\[
 C_4(G) = 2\sum_{u \in V(G)} \sum_{v \in V(G)\setminus\{u\}}\binom{|N(u)\cap N(v)|}{2}.
\]
If $u$ is balanced, then, by \prop{prop:balanced}, the number of $v$ with $|N(u) \cap N(v)|$ far from $\density^2 n$ is small, so
\begin{align*}
 C_4(G) & = 2 \cdot (1 + O(\epsilon))n \cdot (1 + O(\epsilon))n \cdot \binom{(\density +O(\epsilon))^2n}{2} + O(\epsilon) n^2 \binom n 2
\\      & = (\density + O(\epsilon))^4n^4 + O(\epsilon) n^4
\\      & = \left(1 + \order{\epsilon/\density ^4}\right)\density^4n^4,
\end{align*}
where the main term here accounts for balanced vertices $u$ and $v$ with close to $\density ^2n$ common neighbours, and the error 
term bounds the contribution to the sum from each other pair by $\binom n 2$.

Now the trace of $P^4$ is a weighted sum of the closed walks of length $4$ in $G$, where the weight of the closed walk  $uvwx$ is 
$1/(d(u)d(v)d(w)d(x))$.  Thus
\begin{align*}
  \sum_{v \in V(G)} (P^4)_{vv}
   & = \frac{(1 + O(\epsilon/\density ^4))\density^4n^4}{((\density + O(\epsilon ))n)^4}
       + \frac{O(\epsilon) n^4}{(\mindeg n)^4}
       + \frac{\order{n^3}}{(\mindeg n)^4} \\
   & = 1 + \order{\epsilon / \density ^4} + \order{\epsilon / \mindeg^4} + \order{1 / (\mindeg^4n)},
\end{align*}
where the main term counts the contribution from $4$-cycles containing only balanced vertices and the error
terms account for the contributions from $4$-cycles with at least one unbalanced vertex and
from closed walks of length $4$ which are not $4$-cycles respectively.  (The lower bound on the minimum degree 
of $G$ gives an upper bound of $1/(\mindeg n)^4$ for the weight of any one walk.)  But we also have
\begin{equation*}
 \sum_{v \in V(G)} (P^4)_{vv} = \sum_{i=1}^n \lambda_i^4 = 1 + \sum_{i=2}^n \lambda_i^4,
\end{equation*}
from which it follows that
\begin{equation*}
  \lambda^4 \leq \sum_{i=2}^n \lambda_i^4 = \order{\epsilon / \density ^4} + \order{\epsilon / \mindeg^4} + \order{1 / (\mindeg^4n)} \leq 1/16,
\end{equation*}
for $\density \geq \mindeg \geq C \epsilon^{1/4}$ and $n$ sufficiently large.
\end{proof}

For the next lemma we will need to approximate one probability measure by another on the same space. Given a finite probability space
$\Omega$, the \defn{total variation distance} between two probability measures $\mu_1$ and $\mu_2$ is defined by
\begin{equation*}
 d_{TV}(\mu_1, \mu_2) = \frac{1}{2}\sum_{\omega \in \Omega} |\mu_1(\omega) - \mu_2(\omega)|.
\end{equation*}
This is the amount of probability mass that would have to be moved to turn one distribution into the other.

Combining \lem{lem:exponential-decay} with \lem{lem:spectral-gap}, it is easy to see that the total variation distance between $W_t$ 
and a vertex sampled from the stationary distribution is small when $t$ is moderately large.  In fact, we get much 
more. 

Let $L = (\log n)^2$, and let $K = \length n^2 /L$. Given $i<L$, let $W^{(i)}$ denote the
subsequence of $W$ obtained by starting from $W_i$ and taking $L$ steps at a time: that is, $W^{(i)} = (W^{(i)}_1, \ldots, W^{(i)}_{K})$
where $W^{(i)}_{j} = W_{i+(j-1)L}$ for all $j\leq K$.  For each $v\in V(G)$, let $X^{(i)}_{v}$ be the random variable
which counts the number of times $W^{(i)}$ visits $v$. Our next lemma shows that with high probability $X^{(i)}_{v}$ is close to its 
mean.

\begin{lemma} \label{lem:subsequence}
  Let $G$ be a graph satisfying the conditions of \lem{lem:spectral-gap} and let $v \in V(G)$. Then we have
\begin{equation*}
 \Pr{\abs{X^{(i)}_v - K \pi_v} \geq \sqrt {\frac{8 \log n} {K\pi_v} } K\pi_v} = \order{n^{-3}}.
\end{equation*}
\end{lemma}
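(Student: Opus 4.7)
The plan is to exploit the rapid mixing of the walk. Combining \lem{lem:spectral-gap} (which gives $\lambda \leq 1/2$) with \lem{lem:exponential-decay}, after $L = (\log n)^2$ steps the walk's distribution is within sup-norm distance $c_\gamma 2^{-(\log n)^2}$ of $\pi$ from any starting point, and hence within total variation distance at most $nc_\gamma 2^{-(\log n)^2}/2$, which is super-polynomially small. Since $W^{(i)}_j = W_{i + (j-1)L}$, this estimate applies to the conditional law of $W^{(i)}_j$ given $W^{(i)}_1, \ldots, W^{(i)}_{j-1}$ for every $j \geq 2$.

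First I would couple $W^{(i)}_2, \ldots, W^{(i)}_K$ step by step with an iid sequence $Y_2, \ldots, Y_K$ drawn from $\pi$, using the maximal coupling at each step. The one-step coupling succeeds with conditional probability at least $1 - nc_\gamma 2^{-(\log n)^2}/2$, so a union bound over the $K = \length n^2/(\log n)^2$ steps shows that the two sequences agree at every position with probability $1 - o(n^{-k})$ for every fixed $k$. Under this coupling $X^{(i)}_v$ differs from $S := \sum_{j=2}^K \mathbf{1}[Y_j = v]$ by at most $1$ off an event of super-polynomially small probability, and $S \sim \mathrm{Bin}(K-1, \pi_v)$.

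Applying Bernstein's inequality, $\Pr{\abs{S - (K-1)\pi_v} \geq t} \leq 2\exp(-t^2/(2(K-1)\pi_v + 2t/3))$ with $t = \sqrt{8K\pi_v \log n}$ yields $2n^{-3}$ provided we are in the Gaussian regime $t \leq K\pi_v$. That regime holds automatically: the minimum degree bound $d(v) \geq \gamma n$ forces $\pi_v \geq \gamma/n$, hence $K\pi_v \geq \length\gamma n/(\log n)^2$, which dwarfs $\log n$. The $O(1)$ discrepancies between $X^{(i)}_v$ and $S$, and between $(K-1)\pi_v$ and $K\pi_v$, are absorbed since $t \gg 1$. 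The only slightly delicate step is constructing the coupling, but this is routine once the sup-norm mixing bound is available; the real content of the lemma lies in the choice $L = (\log n)^2$, just large enough that $Kn\lambda^L$ is super-polynomially small while $K\pi_v$ remains large enough for Chernoff to deliver the required $O(n^{-3})$.
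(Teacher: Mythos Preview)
Your proposal is correct and follows essentially the same approach as the paper. The paper bounds the total variation distance between the law of $W^{(i)}$ on $V(G)^K$ and the product measure $\pi^K$ directly, by multiplying out the conditional probabilities and using the same sup-norm mixing estimate you invoke; it then applies Chernoff under $\pi^K$ and adds the $d_{TV}$ error. Your step-by-step maximal coupling is the standard equivalent reformulation of that total-variation bound, and your use of Bernstein in place of Chernoff is immaterial in the Gaussian regime you correctly identify, so the two arguments are interchangeable.
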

\begin{proof}
 Let $\mu = \pi^K$ be the $K$-fold product measure of $\pi$ on $V(G)^K$; that is,
 $\mu (w) = \prod _{i=1}^K\pi _{w_i}$ for $w \in V(G)^K$. By
 \lem{lem:exponential-decay} (absorbing the constant $c_\mindeg$ into the $O(\cdot)$ notation) and \lem{lem:spectral-gap}, we have
	\begin{align*}
    \Pr{W^{(i)} = w} & = \mathbb P\big(W^{(i)}_1 = w_1\big)\mathbb P\big(W^{(i)}_2 = w_2 | W^{(i)}_1 = w_1\big) \dotsm \mathbb P\big(W^{(i)}_{K} = w_{K} | W^{(i)}_{K - 1} = w_{K - 1}\big) \\
                & = \left(\pi_{w_1} + \order{2^{-(\log n)^2}}\right)\left(\pi_{w_2} + \order{2^{-(\log n)^2}} \right) \dotsm \left(\pi_{w_{K}} + \order{2^{-(\log n)^2}} \right) \\
                & = \left(\pi_{w_1} + \order{n^{-6}}\right)\left(\pi_{w_2} + \order{n^{-6}} \right) \dotsm \left(\pi_{w_{K}} + \order{n^{-6}} \right) \\
                & = \left(1 + \order{n^{-3}}\right)\mu (w),
  \end{align*}
 since $\frac \mindeg {\density n} \leq \pi_v \leq \frac 1 {\density n}$ for all $v$ and $K = O(n^2)$. Summing over all $w$ gives that
\begin{equation*}
d_{TV}\left({\mathbb P} ,\mu \right) = \order{n^{-3}},
\end{equation*}
 where $\mathbb{P}$ is the measure on $V(G)^K$ induced by $W^{(i)}$.
 Now let
 \begin{equation*}
  A = \left\{w \in V(G)^K: |X^{(i)}_v(w) - K \pi_v| \geq \sqrt {8 \log n  K\pi_v}\right\}.
 \end{equation*}
 By Chernoff's inequality (see \cite[A.1.11 and A.1.13]{Alon-Spencer}),
 \[
  \mu (A) \leq 2e^{- (4 + o(1)) \log n} = \order{n^{-3}}.
 \]
Since $\Pr{A} \leq \mu (A) + d_{TV}({\mathbb{P}},\mu )$, the result follows.
\end{proof}

Now let $X_v = \sum _{i=0}^{L-1} X^{(i)}_v$ be the number of visits $W$ makes to vertex $v$.
Observing that $LK\pi_v = \length n^2\pi_v = (1+\frac 1 {n-1})\frac {\length}{\rho} d(v)$, we obtain the following corollary by summing over $i$ and $v$.

\begin{corollary} \label{cor:bmd-visits}
Let $\length, \epsilon, \density, \mindeg> 0$ with $\density, \mindeg \geq C\epsilon^{1/4}$ for some absolute constant $C>0$.
Let $G$ be an $n$-vertex $\epsilon$-quasirandom graph with minimum degree at least $\mindeg n$, and let $W$ be a
random walk on $G$ of length $\length n^2$. Then
\begin{equation*}
\pushQED{\qed} 
  \Pr{\abs{X_v - \frac {\length}{\rho} d(v)} \geq \sqrt {\frac {8 \log n}{K\pi_v}} \frac {\length}{\rho} d(v) \text{ for some } v} = \order{n^{-1}}. \qedhere
\popQED
\end{equation*}
\end{corollary}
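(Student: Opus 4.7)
The plan is to combine \lem{lem:subsequence} with a union bound over vertices and shifts, and then reconstruct $X_v=\sum_{i=0}^{L-1}X^{(i)}_v$ via the triangle inequality. All of the analytic heavy lifting, namely the approximation of $W^{(i)}$ by a product of stationary samples (using \lem{lem:exponential-decay} together with \lem{lem:spectral-gap}) and the Chernoff bound, has already been carried out in \lem{lem:subsequence}, so what remains is essentially bookkeeping.

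Concretely, I would apply \lem{lem:subsequence} to each of the $nL=n(\log n)^2$ pairs $(v,i)$. Each event
\[
 \bigl| X^{(i)}_v - K\pi_v \bigr| \geq \sqrt{8\log n\cdot K\pi_v}
\]
has probability $\order{n^{-3}}$, so a union bound shows that with probability at least $1-\order{(\log n)^2/n^2}=1-\order{n^{-1}}$ all of these events fail simultaneously. On this good event, for every $v$,
\[
 \bigl| X_v - LK\pi_v \bigr| \;\leq\; \sum_{i=0}^{L-1} \bigl| X^{(i)}_v - K\pi_v \bigr| \;\leq\; L\sqrt{8\log n\cdot K\pi_v} \;=\; \sqrt{\tfrac{8\log n}{K\pi_v}}\,LK\pi_v.
\]

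Finally I would invoke the identity $LK\pi_v = (1+1/(n-1))(\length/\rho)d(v)$ noted just before the statement to replace $LK\pi_v$ by $(\length/\rho)d(v)$ on both sides. The resulting correction is of order $d(v)/n$, which is absorbed into the deviation term since $K\pi_v=\Theta(n/(\log n)^2)$ implies $\sqrt{\log n/(K\pi_v)}\gg 1/n$. There is no real obstacle: the only thing to check is that the union bound has enough slack, and this is exactly what the choice $L=(\log n)^2$ was designed for, as it drives the total variation error in \lem{lem:subsequence} down to a polynomial in $1/n$ small enough to afford a union bound of size $nL$.
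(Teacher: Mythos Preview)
Your argument is correct and is exactly the approach the paper takes: the paper simply says the corollary follows ``by summing over $i$ and $v$'', which is precisely the union bound over the $nL$ pairs $(v,i)$ from \lem{lem:subsequence} together with the triangle inequality $\bigl|X_v-LK\pi_v\bigr|\le\sum_i\bigl|X^{(i)}_v-K\pi_v\bigr|$ and the identity $LK\pi_v=(1+1/(n-1))(\length/\rho)d(v)$. Your write-up in fact supplies more detail than the paper does.
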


Since $K\pi_v = \Omega(n/(\log n)^2)$, with high probability the number of visits $W$ makes to each $v \in V(G)$ is $\left(\frac{\length}{\density} + o(1)\right) d(v)$.  We can now complete the proof of \thm{thm:bounded-minimum-degree}.

\begin{proof}[Proof of \thm{thm:bounded-minimum-degree}]
 By \cor{cor:bmd-visits}, we have that, with probability $1-o(1)$,
 \[
  \GL{ \length / \density - o(1)} \subseteq \GW \subseteq \GL{ \length / \density + o(1)}.
 \]
 From the proof of \thm{thm:list-quasirandomness}, we have that, with probability $1-o(1)$,
 \[
  |e_{\GW}(A,B) - (1 - e^{-2\frac \length \density}) \density |A||B|| < (1 - e^{-2\frac \length \density} + o(1)) \epsilon |A||B|,
\]
 for all $A, B \subseteq V(G)$ with $|A|, |B| \geq \epsilon n$.  Since $1 - e^{-2\frac \length \density} < 1$, $\GW$ is $\epsilon$-quasirandom with probability $1-o(1)$.
\end{proof}

\section{General case}
\label{sec:general-case}

We now move to the case of a general $\epsilon $-quasirandom graph $G$ with edge
density $\density $. Such $G$ must always contain
a connected component of order at least $(1-\epsilon )n$ (as otherwise we can
find two sets of size at least $\epsilon n$ with no edges between them), so by
restricting our walk to this component we can
assume that $G$ is connected.

The extra difficulty in the general case is that there might be small sets of
vertices that are only weakly connected to the rest of the graph in which the
random walk can get stuck.  For example, let $G$ be a graph consisting of a
small clique of order $\epsilon^2 n / 2$ joined to a large clique of order
$(1-\epsilon^2 / 2)n$ by a single edge. Then $G$ is $\epsilon$-quasirandom, but
it is not even true that the number of edges in $\GW$ is concentrated near some
value.  Indeed, if we start our random walk in the large clique then with
positive probability (depending on $\epsilon$ but not on $n$) $W$ will lie
entirely within the large clique, but there is also a positive probability
(depending on $\epsilon$ but not on $n$) that $W$ will cross to the small clique
in the first $\epsilon n^2$ steps and remain there.  So for general quasirandom
graphs we cannot hope for as strong a result as
\thm{thm:bounded-minimum-degree}, and our assertions about high probability will
necessarily depend on $\epsilon$ as well as $n$.  In this section we use `with
high probability' to mean `with probability $1-\Oe$', with $\Oe$ small
(depending on $\epsilon$) for large $n$ as defined in \sect{sec:intro}.

Our task in this section is to find a weaker replacement for
\cor{cor:bmd-visits} in \sect{sec:bounded-mindeg}. Instead of saying that the
random walk visits every vertex $v$ around $\frac \length \density d(v)$ times,
we ask instead that the random walk visits \emph{most} vertices of $G$ around
$\frac \length \density d(v)$ times.  Recall that we call a vertex $v$
\defn{balanced} if $|d(v)-\rho n|\leq \epsilon n$. We will show that, if $W$ is a
random walk of
length $\length n^2$ on $G$ with $W_0$ balanced, then, with high probability, $W$
hits
most vertices of $G$ about the right number of times.  The results in
\sect{sec:list} can then be used to prove
\thm{thm:general-case} in the same way that \thm{thm:bounded-minimum-degree}
was deduced from \cor{cor:bmd-visits}.

Our first lemma gives a lower bound on the probability that a given step of a
random walk $W$ is in a set $S \subseteq V(G)$.  
Write $\indicator{X}$ for the indicator function of a set $X$ and
$\indicator{v}$ for the indicator function of the set $\{v\}$.  Note
that if the initial distribution for $W_0$ is $\pi$ then $\Pr{W_i\in S} = \sum
_{v\in S} \pi_v = \pi \cdot \indicator S$ for
any set $S\subseteq V(G)$ when $i\geq 0$. The next result shows that this is
still almost true if $W$ starts from a balanced vertex,
$S$ is large and $i\geq 2$.

\begin{lemma} \label{bigsetprobabilities}
 Let $G$ be a connected $n$-vertex $\epsilon$-quasirandom graph with $\density
\binom {n}{2}$ edges, and let $v$ be a balanced vertex.  Let $S \subseteq V(G)$
with $|S| \geq \epsilon n$.  Then, for a random walk $W$ starting at $v$, we
have
\begin{equation*}
  \Pr{W_i \in S} \geq \pi \cdot \indicator S - 8 \sqrt \epsilon / \density \geq
|S|/n - 9 \sqrt \epsilon / \density,
\end{equation*}
for $i \geq 2$ and $n$ sufficiently large.
\end{lemma}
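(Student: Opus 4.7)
I would prove the lemma by induction on $i$, with the base case $i=2$ handled by direct computation and the inductive step using the Markov property.

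\textbf{Base case.} Since $W_1$ is uniform on $N(v)$ and $W_2$ is a uniform neighbour of $W_1$, we may write
\[
\Pr{W_2 \in S} \;=\; \frac{1}{d(v)}\sum_{u \in N(v)} \frac{e_G(u,S)}{d(u)}.
\]
Two applications of \prop{prop:balanced}---once with $X = V(G)$, yielding at most $2\epsilon n$ unbalanced vertices, and once with $X = S$ (valid since $|S| \geq \epsilon n$), yielding at most $\epsilon n$ further vertices $u$ with $e_G(u,S) < (\rho - \epsilon)|S|$---identify a ``bad'' set $B$ of size at most $3\epsilon n$. For $u \in N(v) \setminus B$ both the numerator and denominator are tightly controlled, so $e_G(u,S)/d(u) \geq |S|/n - O(\epsilon/\rho)$. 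Since $v$ is balanced, $d(v) \geq (\rho-\epsilon)n$ and so the fraction of bad neighbours of $v$ is at most $3\epsilon n/d(v) = O(\epsilon/\rho)$. Averaging gives $\Pr{W_2 \in S} \geq |S|/n - O(\epsilon/\rho)$, from which both stated inequalities for $i=2$ follow via $\pi \cdot \indicator{S} = (1+O(\epsilon/\rho))|S|/n$ (a consequence of quasirandomness applied to the pair $(V(G),S)$).

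\textbf{Inductive step.} For $i \geq 3$, the Markov property gives
\[
\Pr{W_i \in S} \;=\; \sum_{u} \Pr{W_{i-1} = u} \frac{e_G(u,S)}{d(u)} \;\geq\; \left(\frac{|S|}{n} - O\!\left(\frac{\epsilon}{\rho}\right)\right) \Pr{W_{i-1} \in V(G) \setminus B}.
\]
Since $|V(G) \setminus B| \geq (1-3\epsilon)n \geq \epsilon n$, the inductive hypothesis applies to $V(G)\setminus B$ and gives $\Pr{W_{i-1} \in V(G)\setminus B} \geq 1 - 3\epsilon - 8\sqrt\epsilon/\rho$; substituting back would yield the lemma for $i$ once the constants are balanced.

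\textbf{The main obstacle.} The naive version of the induction above loses an additive $O(\epsilon/\rho)$ at each step, which accumulates linearly in $i$ and rapidly overwhelms the claimed $8\sqrt\epsilon/\rho$ bound. Closing the induction therefore requires absorbing the per-step loss into the $\sqrt\epsilon/\rho$ slack rather than adding to it. I expect this is accomplished by using that the bad set $B$ has small stationary mass: low-degree bad vertices contribute at most $|B|/n = O(\epsilon)$ to $\pi$, and high-degree bad vertices are few enough that the total $\pi(B)$ is $O(\epsilon/\rho)$. A careful reversibility/conductance-type argument controlling how long the walk can linger in $B$ should then replace the per-step loss by a single $\sqrt\epsilon/\rho$ term valid uniformly in $i$. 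This last piece---turning a linearly-accumulating error into a uniform one---is the only step I cannot fully predict in advance, and is the heart of why $\sqrt\epsilon$ rather than $\epsilon$ appears in the conclusion.
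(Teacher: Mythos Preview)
You have correctly identified the central difficulty: a step-by-step induction loses an additive $O(\epsilon/\rho)$ per step, and nothing in your outline prevents this from accumulating. Your speculated fix --- a conductance-type argument bounding the time the walk spends in the bad set $B$ --- is not how the paper closes the gap, and I do not see how to make such an argument yield a bound that is uniform in $i$ without essentially reproving a spectral gap, which is exactly what is unavailable in the general case.

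The paper avoids induction entirely by working in $\ell_2$. After the same two-step computation you carry out, one obtains a coordinatewise lower bound $\indicator{v}P^2 \geq \mathbf{b}$ where $\mathbf{b}$ is a constant multiple of the indicator of a set of size $(1-O(\epsilon))n$. The key observation is that $\|\mathbf{b}-\pi\|_2 = O(\sqrt{\epsilon}/(\rho\sqrt{n}))$: the $\sqrt{\epsilon}$ arises here, from summing $O(\epsilon n)$ coordinates each of size $O(1/(\rho n))$ in the $\ell_2$ norm. Then for any $i\geq 2$,
\[
\Pr{W_i\in S} \;\geq\; \mathbf{b}\,P^{i-2}\indicator{S} \;=\; \pi\cdot\indicator{S} \;+\; (\mathbf{b}-\pi)P^{i-2}\indicator{S},
\]
and the error term is bounded by Cauchy--Schwarz together with the fact that the eigenvalues of $P$ have absolute value at most $1$, giving $|(\mathbf{b}-\pi)P^{i-2}\indicator{S}| \leq \|\mathbf{b}-\pi\|_2\,\|\indicator{S}\|_2 \leq 8\sqrt{\epsilon}/\rho$. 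The point is that the contraction property of $P$ makes this bound \emph{independent of $i$}: all of the loss is incurred once, at step $2$, and further applications of $P$ cannot enlarge it. This is precisely the mechanism your inductive approach lacks.
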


\begin{proof}
 We first show that the random walk is quite well mixed after only two steps. 
Let $A$ be the set of neighbours of $v$ with degree at most $(\density +
\epsilon) n$ and $B$ be the set of vertices with at least $(\density -
\epsilon)|A|$ neighbours in $A$; thus $A$ and $B$ are the `well-behaved' first and
second neighbourhoods of $v$.  By $\epsilon$-quasirandomness, $|A| \geq d(v) -
\epsilon n \geq (\density - 2\epsilon) n$ and $|B| \geq (1 - \epsilon) n$.  We
have
 \begin{equation*}
  \indicator{v}P = \frac{1}{d(v)}\indicator{N(v)} \geq \frac{1}{(\density +
\epsilon)n} \indicator{A},
 \end{equation*}
 where the inequality holds in each coordinate.  For $x \in B$,
 \begin{equation*}
  \left(\indicator{A}P\right)_x = \!\!\! \sum_{\substack{y \in A \\ xy \in
E(G)}} \!\! \frac{1}{d(y)}
                                \geq \frac {(\density - \epsilon)(\density -
2\epsilon)n} {(\density + \epsilon)n}
                                \geq \density(1 - 4\epsilon/\density),
 \end{equation*}
 where the first inequality holds since each $y\in A$ has degree at most
$(\density + \epsilon) n$, $x$ has at least $(\density - \epsilon)|A|$ neighbours in $A$
and $|A| \geq (\density - 2\epsilon) n$.  Since the entries of $P$ are
non-negative we can compose these inequalities to obtain
 \begin{equation*}
  \indicator{v}P^2 \geq \frac {\left(1 - 5\epsilon / \density\right)} {n}
\indicator{B}.
 \end{equation*}
 Let $\mathbf{b} = \frac {\left(1 - 5\epsilon / \density\right)} {n}
\indicator{B}$.  Since $\pi_x = \frac{d(x)}{2\density \binom n 2}$, if $x$ is a
balanced vertex then $\frac{(1-\epsilon/\density)}{n-1} \leq \pi_x \leq
\frac{(1+\epsilon/\density)}{n-1}$; otherwise we have the weaker bound $\pi_x
\leq \frac{1}{\density n}$.  Since at most $2\epsilon n$ vertices are unbalanced
and at most $\epsilon n$ vertices are not in $B$,
 \begin{equation*}
  \ltwo{\mathbf{b} - \pi} \leq {\left(n{\left(\frac{7\epsilon}{\density
n}\right)\!}^2 + 3\epsilon n{\left(\frac {2} {\density
n}\right)\!}^2\right)\!}^{1/2} \leq {\left( \frac {64 \epsilon} {\density^2 n}
\right)\!}^{1/2}.
 \end{equation*}
 Then, for $i \geq 2$,
 \begin{align*}
  \Pr{W_i \in S} & = \indicator{v} P^i \indicator{S} \\
                 & = \indicator v P^2 \cdot P^{i-2}\indicator S \\
                 & \geq \mathbf{b} P^{i-2}\indicator S \\
                 & = \pi P^{i-2} \indicator S + (\mathbf{b} - \pi) P^{i-2}
\indicator S.
 \end{align*}
 By Cauchy-Schwarz, and the fact that the eigenvalues of $P$ are at most 1 in absolute value,
 \begin{equation*}
  \|(\mathbf{b} - \pi) P^{i-2} \indicator S\|_2 \leq \ltwo{\mathbf{b} - \pi}
\ltwo{\indicator S} \leq{\left( \frac {64 \epsilon |S|} {\density^2 n}
\right)\!}^{1/2} \leq 8 \sqrt \epsilon / \density,
 \end{equation*}
 and so
 \begin{equation*}
  \Pr{W_i \in S} \geq \pi \cdot \indicator S - 8 \sqrt \epsilon / \density,
 \end{equation*}
 proving the first inequality.  Since at least $|S| - 2\epsilon n$ elements of
$S$ are balanced,
 \begin{equation*}
  \pi \cdot \indicator S = \sum_{x\in S} \frac {d(x)}{2 \density \binom n 2}
\geq \frac {(|S| - 2\epsilon n)(\density - \epsilon)}{\density n} \geq |S|/n -
2\epsilon - \epsilon/\density \geq |S| / n - \sqrt \epsilon / \density,
 \end{equation*}
 which proves the second inequality.
\end{proof}

We now consider the following variant of the list model for constructing a
random walk.  Fix some small length $L$ and let $K = \length n^2 /L$.  By a
\defn{block rooted at $v$} we mean a random walk of length $L$ starting at $v$. 
For each vertex $v$, let $\Lambda_v$ be an infinite list of blocks rooted at
$v$.  We construct a random walk of length $\length n^2$ as follows.  Choose
$W_0$ from the given initial distribution, and, at each stage $s=1,\dotsc,K$,
let $W_{(s-1)L}\cdots W_{sL}$ be the first unused block rooted at $W_{(s-1)L}$. 
At the end of the construction we have examined $K$ blocks in total from the top
of the $n$ lists.  Let $M$ be the set of blocks examined (equivalently, the
\emph{multiset} of roots of blocks used).

\begin{figure}[ht]
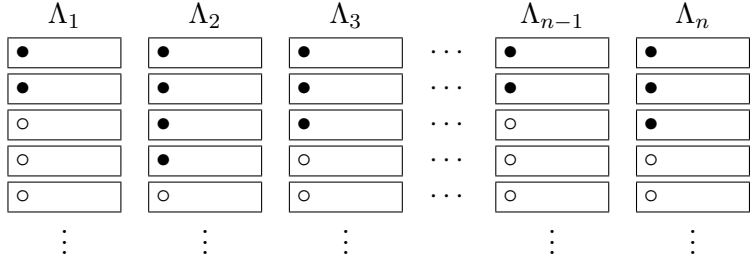


\newcommand{\bblock}{\framebox[1.5cm][l]{$\bullet$}}
\newcommand{\wblock}{\framebox[1.5cm][l]{$\circ$}}

\begin{equation*}
 \begin{array}{cccccc}
  \Lambda_1 & \Lambda_2 & \Lambda_3 &        & \Lambda_{n-1} & \Lambda_n    \\
  \bblock  & \bblock   & \bblock    & \cdots & \bblock       & \bblock \\
  \bblock  & \bblock   & \bblock    & \cdots & \bblock       & \bblock \\
  \wblock  & \bblock   & \bblock    & \cdots & \wblock       & \bblock \\
  \wblock  & \bblock   & \wblock    & \cdots & \wblock       & \wblock \\
  \wblock  & \wblock   & \wblock    & \cdots & \wblock       & \wblock \\
  \vdots   & \vdots    & \vdots     &        & \vdots        & \vdots
 \end{array}
\end{equation*}
\caption{The construction examines $K$ blocks from the top of the lists
$\Lambda_v$, but we cannot tell in advance which blocks these will be.}
\label{fig:block-list-model}
\end{figure}

This construction generalises the simple list model (which corresponds to the
case $L=1$), and we again hope to exploit the independence of blocks by applying
standard concentration inequalities.  There are two main obstacles.  One is that
we do not know anything about the distribution of a block rooted at a vertex $v$
which is not balanced.  We therefore first show that most of the root vertices
are balanced.  The second obstacle is that we do not know in advance which set
of blocks we will examine.  We handle this by approaching the problem from the
other direction: for a given multiset $M$, what is the probability that the
corresponding blocks do not contain an even distribution of the vertices?  This
turns out to be small enough that summing over all possible $M$ gives the bound
we require.

\begin{lemma} \label{hitting-large-balanced-set}
  Let $G$ be a connected $n$-vertex $\epsilon$-quasirandom graph with $\density
\binom {n}{2}$ edges, and let $W$ be a random walk of length $\length n^2$
starting at a balanced vertex of $G$.  Let $\delta = 3 \sqrt[4] \epsilon / \sqrt
\density$ and suppose that $n$ is sufficiently large.  Then with probability at
least $1 - 3\delta$ there exists a set $B\subseteq V(G)$, with $|B| \geq (1 -
\delta )n$, such that each vertex in $B$ is hit at least $(1 - 4\delta)\length
n$ times by $W$.
\end{lemma}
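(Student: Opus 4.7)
The plan is to decompose $W$ into $K=\length n^2/L$ blocks of length $L$ and, for each candidate bad set $T\subseteq V(G)$ of size $\lceil\delta n\rceil$, apply a martingale concentration inequality to the per-block hit counts to $T$.  The parameter $L$ is chosen in the range $[2/\delta^2,\,c\delta^3\length n/\log(1/\delta)]$, which is non-empty for $n$ large.  A vertex is hit fewer than $(1-4\delta)\length n$ times only if it lies in some set of size $\lceil\delta n\rceil$ that is collectively underhit, so it suffices to push the per-$T$ failure probability below $\binom{n}{\lceil\delta n\rceil}^{-1}$ and take a union bound.

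\textbf{Step 1: most block roots are balanced.}  Let $U$ be the set of unbalanced vertices; by \prop{prop:balanced}, $|U|\leq 2\epsilon n$.  Applying \lem{bigsetprobabilities} to $V(G)\setminus U$ gives $\Pr[W_i\in U]\leq 2\epsilon+9\sqrt\epsilon/\density\leq 2\delta^2$ for every $i\geq 2$.  Since $W_0$ is balanced by hypothesis, summing over $s=1,\dotsc,K$ shows that the expected number $\tau$ of blocks rooted in $U$ is at most $2\delta^2 K$, so by Markov's inequality $\tau\leq 2\delta K$ with probability at least $1-\delta$.

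\textbf{Step 2: martingale concentration for fixed $T$.}  Let $\mathcal F_s$ be the $\sigma$-algebra generated by the first $s$ blocks, and define $H_T^{(s)}=\sum_{i=0}^{L-1}\mathbf 1\{W_{(s-1)L+i}\in T\}$ and $H_T=\sum_{s=1}^K H_T^{(s)}$.  By construction of the block-list model, block $s$ is, conditionally on $\mathcal F_{s-1}$, an independent walk of length $L$ starting at $W_{(s-1)L}$, so $Z_s=H_T^{(s)}-\mathbb E[H_T^{(s)}\mid\mathcal F_{s-1}]$ is a martingale difference sequence bounded by $L$.  Azuma's inequality then gives
\[
 \Pr\!\Big[H_T - \textstyle\sum_s\mathbb E[H_T^{(s)}\mid\mathcal F_{s-1}] \leq -\delta^2\length n^2\Big] \leq \exp\!\big(-\delta^4\length n^2/(2L)\big).
\]
For balanced roots, \lem{bigsetprobabilities} (applied with $S=T$, which has $|T|\geq\epsilon n$) yields $\mathbb E[H_T^{(s)}\mid\mathcal F_{s-1}]\geq (L-2)(|T|/n-\delta^2)$; unbalanced roots contribute non-negatively.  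On the event $\tau\leq 2\delta K$ from Step~1, a short calculation using $L\geq 2/\delta^2$ gives $\sum_s\mathbb E[H_T^{(s)}\mid\mathcal F_{s-1}]\geq (1-3\delta)\delta\length n^2$, and hence $H_T\geq (1-4\delta)|T|\length n$ with probability at least $1-\exp(-\delta^4\length n^2/(2L))$.

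The union bound over the $\binom{n}{\lceil\delta n\rceil}\leq 2^{nH(\delta)}$ candidate sets $T$ costs a factor $2^{nH(\delta)}$, which by our upper bound on $L$ is absorbed by the Azuma bound at the cost of an additional $\delta$; combined with the $\delta$ failure from Step~1, the total failure probability is at most $3\delta$.  Taking $B$ to be the complement of the set of vertices hit fewer than $(1-4\delta)\length n$ times then yields the lemma.  \emph{The main obstacle} is the two-layer randomness in the block-list model: blocks are independent only conditional on their roots, while the root sequence itself is driven by the walk.  This forces us to handle unbalanced roots separately (since \lem{bigsetprobabilities} gives no useful lower bound when the starting vertex is unbalanced) and to tune $L$ so that the per-block expectation bound is close to $L|T|/n$ while $K=\length n^2/L$ remains large enough for Azuma to beat the entropic cost $2^{nH(\delta)}$ of the union bound.
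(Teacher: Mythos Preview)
Your proof is correct and follows the same broad outline as the paper---block decomposition of $W$, Markov's inequality to bound the number of blocks with unbalanced roots, \lem{bigsetprobabilities} to lower-bound per-block expected hits, and a union bound over candidate bad sets $T$ of size $\lceil\delta n\rceil$---but the concentration step is genuinely different.  The paper \emph{freezes} the multiset $M$ of balanced root vertices actually used, so that the corresponding blocks become truly independent; it then applies Chernoff level-by-level (to the $j$th position of each block, for each fixed $j\ge 2$) and pays an additional union-bound factor of $\binom{K+n-1}{n-1}$ over all possible multisets $M$, which is what forces $K\gg n\log n$ (equivalently $L\ll n/\log n$, with $L\to\infty$ so that the two uncontrolled initial steps of each block are negligible).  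You instead run Azuma--Hoeffding directly on the martingale differences $Z_s=H_T^{(s)}-\mathbb E[H_T^{(s)}\mid\mathcal F_{s-1}]$, so no freezing of $M$ and no per-level decomposition are needed; the price is the cruder increment bound $|Z_s|\le L$, which is why your upper constraint on $L$ is $O(\delta^3\length n/\log(1/\delta))$.  Your route is slightly more streamlined (one union bound instead of two, and explicit finite constraints on $L$ rather than $L\to\infty$); the paper's route makes the block independence fully explicit, which is the form it later adapts to tree-indexed walks in \sect{sec:trees}.
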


\begin{proof}
 Take $L = \omega_n$ for any $\omega_n \ll n / \log n$ which tends to infinity
as $n\to\infty$, and let $K = \length n^2 /L$.  Construct a random walk $W$ as
described above and let $x_1, \dotsc, x_K$ be the roots of the $K$ blocks used. 
We first show that with high probability many of the vertices $\{x_1,\ldots
,x_K\}$ are balanced.

 Let $U$ be the number of $x_i$ that are unbalanced.  The start vertex $x_1$ is always balanced by assumption.  By
\lem{bigsetprobabilities}, for $i \geq 2$,
 \begin{equation*}
  \Pr{x_i\text { is unbalanced}} \leq 1 - \left((1 - 2\epsilon) - \delta
^2\right) \leq 2\delta ^2,
 \end{equation*}
 since there are at least $(1-2\epsilon )n$ balanced vertices and $\delta^2 >
2\epsilon$.  By Markov's inequality,
 \begin{equation*} 
  \Pr{U \geq \delta K} \leq \frac{\E{U}}{\delta K} \leq \frac{2 \delta^2 K}
{\delta K} = 2 \delta,
 \end{equation*}
so with probability at least $1-2\delta$, at least $(1-\delta)K$ blocks are used whose starting vertex is balanced.  We now show that, with high probability, for any such multiset of balanced vertices $M$, the corresponding blocks contain most balanced vertices about the right number of times.

So let $M$ be a fixed multiset of $(1-\delta)K$ balanced vertices and let $W^{(1)},
W^{(2)}, \dotsc,$ $W^{((1-\delta)K)}$ be the corresponding blocks.  Note that, as $M$ is fixed, the blocks $W^{(i)}$ are independent.  Let $S\subseteq V(G)$ with $|S|\geq\delta n$.  By \lem{bigsetprobabilities},
for every $1 \leq i \leq (1-\delta )K$ and every $j \geq 2$ we have
$\Pr{W^{(i)}_j \in S} \geq \delta - \delta^2$.  Let $X_{ij}$ be the indicator of
the event $W^{(i)}_j \in S$, let $X_j = \sum_{i=1}^{K} X_{ij}$ and let $X_{M,S}
= \sum_{j=1}^L X_j$.  For fixed $j$ the $X_{ij}$ are independent, so by
Chernoff's inequality (see \cite[Appendix A]{Alon-Spencer}),
 \begin{equation*}
  \Pr{X_j < (\delta - 2 \delta^2)|M|} \leq e^{- 2 \delta ^4 |M|}.
 \end{equation*}
 Hence
 \begin{align*}
  \Pr{X_{M,S} < (\delta - 4 \delta^2)\length n^2} & \leq \Pr{X_{M,S} < (\delta -
3 \delta^2)(1-\delta)KL} \\
                                                  & \leq \Pr{X_j < (\delta - 2
\delta^2)|M| \text{ for some } 2 \leq j \leq L} \\
                                                  & \leq Le^{- 2 \delta ^4 |M|},
 \end{align*}
where the second inequality holds for large $n$ because the contribution from
$X_1$ is negligible as $L \to \infty$.

Let $A = \{x \in V(G): x \text{ is visited at least } (1-4\delta)n \text{ times by } W\}$.  If $|A| < (1 - \delta) n$, then either $\delta K$ of the $x_i$ are unbalanced, or there is set $S$ of $\delta n$ vertices and a multiset $M$ of $(1-\delta )K$
balanced vertices of $V(G)$ for which $X_{M,S} < (\delta - 4 \delta^2) \length n^2$.  Hence
\begin{align*}
 \Pr{|A| < (1 - \delta) n} & \leq \Pr{U \geq \delta K} + \sum_M \sum_S L e^{- 2 \delta ^4 |M|} \\
 & \leq 2 \delta + \binom {K+n-1} {n-1} \binom n {\delta n} L e^{- 2 \delta ^4 (1-\delta )K} \\
 & \leq 2 \delta + O(K)^n \cdot 2^n \cdot L \cdot e^{- 2 \delta ^4 (1-\delta )K} \\
 & \leq 2 \delta + \exp\!\left( O(n \log n) + O(n) + O(\log n) - 2 \delta ^4 (1-\delta)K \right) \\
 & \leq 3 \delta,
\end{align*}
for $n$ sufficiently large, since $K \gg n \log n$.
\end{proof}

We now have everything we need to complete the proof of \thm{thm:general-case}.

\begin{proof}[Proof of \thm{thm:general-case}]
We will show that, with probability $1-\Oe$, the graph $\GW$ is close to $\GL
{\length /\rho}$.  It then follows from \thm{thm:list-quasirandomness} that $\GW$
is $\Oe$-quasirandom with probability $1-\Oe$.

Since there are at most $2\epsilon n < \delta$ unbalanced vertices in $G$, by
\lem{hitting-large-balanced-set}, with probability at least $1 - 3\delta$, there
is a set $B$ of $(1-2\delta)n$ balanced vertices such that every $v \in B$ is
hit at least $(1-4\delta)\length n \geq (1 -
5\delta)\frac{\length}{\density}d(v)$ times by $W$.  This accounts for
$(1-2\delta)n \cdot (1-4\delta)\length n \geq (1-7 \delta)\length n^2$ of the
list entries examined, so $\GW$ differs from $\GL {\length /\rho}$ by at most
$14\delta\length n^2$ edges.  Since $\delta$ tends to $0$ with $\epsilon$, the
result follows.
\end{proof}

\section{Trees}
\label{sec:trees}

A \defn{homomorphism} from a graph $H$ to a graph $G$ is an edge-preserving map
$\phi : V(H) \to V(G)$.
A random walk can be viewed as a random homomorphism of a path; a natural
generalisation is to consider a random homomorphism of some other tree $T$
(sometimes called a \defn{tree-indexed random walk}).  Just as we traversed a
path in one direction, our trees will be rooted and we think of them as directed
`downwards', away from the root.  In this section we will explore to what extent
the methods of \sect{sec:general-case} can be applied in this more general
setting.

We generate a random homomorphism as follows.
Enumerate the vertices of $T$ as $v_0, v_1, \dotsc, v_k$ where, for each $j$,
$T[v_0,\dotsc,v_j]$ is a connected subtree of $T$ containing the root $v_0$. 
First choose $\phi(v_0)$ from a given initial distribution.  Then, at each stage
$j>0$,
let $u$ be the parent of $v_j$ in $T$ and choose $\phi(v_j)$ uniformly at random
from the
neighbours of $\phi(u)$.  All choices are made independently, and we can think
of these choices as being taken from the lists $L_v$ as before.

Suppose now that $G$ is an $\epsilon$-quasirandom graph on $n$ vertices.  Let
$\phi$ be a random homomorphism of a tree $T$ of size $\length n^2$ to $G$, and
let $\GT$ be the subgraph of $G$ consisting of the edges in the image of $\phi$.
 Is $\GT$ quasirandom with high
probability?  It is easy to see that in general the answer is no. For
example, let $G=K_n$ and let $T$ be an $n/2$-ary tree of depth $2$ (here $\length
= 1/4 +
o(1)$).  Then with high probability $\phi(T)$ contains a constant fraction of
the edges of $G$.
But all of these edges are incident on the neighbourhood of the
root, which has only $(1-e^{-1/2}+o(1))n$ vertices with high probability, so,
with high probability, $\GT$ is not quasirandom.

We seek conditions on $T$ such that we can apply the approach taken in
\sect{sec:general-case}
with minimal changes. The condition we give here imposes an upper bound
on the maximum degree of $T$.

We need an analogue of the second model for the construction of a random walk. 
Instead of breaking our path into many short paths, we break our tree into many
small edge-disjoint subtrees.

\begin{lemma} \label{lem:decompose-tree}
 Let $T$ be a rooted tree with $N$ edges and let $L \leq N$.  Then $T$ can be
written as an edge-disjoint union of rooted trees $R_1, \dotsc, R_K$, each of
size between $L$ and $3L$.
\end{lemma}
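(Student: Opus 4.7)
The plan is a greedy bottom-up decomposition. I will process the vertices of $T$ in post-order (children before parents), maintaining after each vertex $v$ has been processed a \emph{pending} rooted subtree $R_v^*$: a subtree of $T_v$ (the subtree hanging from $v$) containing $v$, whose edges are exactly those of $T_v$ not yet assigned to any completed piece.

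For a leaf, set $R_v^* = \{v\}$ (no edges). For an internal vertex $v$ with children $u_1,\dotsc,u_d$, initialise $Q := \{v\}$ and process the children in turn: let $Q' := Q \cup \{vu_i\} \cup R_{u_i}^*$; if $|Q'| \geq L$, output $Q'$ as a new completed piece $R_i$ (rooted at $v$) and reset $Q := \{v\}$; otherwise continue with $Q := Q'$. After all children have been processed, set $R_v^* := Q$.

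Two invariants control the sizes. First, every pending subtree $R_v^*$ has at most $L-1$ edges, since we cut off the moment the count reaches $L$ and a freshly reset $Q$ is empty. Second, a single child contribution $\{vu_i\} \cup R_{u_i}^*$ adds at most $1 + (L-1) = L$ edges, so the $Q'$ at any moment of cut-off has size at most $(L-1) + L = 2L - 1$. Hence every piece output during the main loop has size in the range $[L, 2L-1]$.

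The one remaining obstacle --- and the reason the factor $3$ is needed --- is the leftover $R_r^*$ at the root $r$, which has fewer than $L$ edges and so cannot stand alone. Since $N \geq L$, the algorithm must have produced at least one completed piece (otherwise all $N$ edges would sit in $R_r^*$). Let $R_j$ be the \emph{last} piece produced, rooted at some vertex $w$. After that moment no further cut-offs occur, so the pending at $w$ (reset to $\{w\}$ immediately after the cut-off) is carried monotonically through $\mathrm{parent}(w), \mathrm{parent}(\mathrm{parent}(w)), \dotsc$ and ends up inside $R_r^*$; in particular $w \in R_r^*$. Moreover $R_j$ sits inside the subtrees of the children of $w$ processed \emph{before} the cut-off, whereas $R_r^*$ only re-enters $T_w$ through children processed \emph{after}, so the two pieces share exactly the vertex $w$. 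Their union is therefore a tree with $|R_j| + |R_r^*| \in [L+1, 3L-2]$ edges, which I declare to be the final piece in place of $R_j$. All pieces now have size in $[L, 3L]$, as required.
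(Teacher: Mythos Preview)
Your proof is correct and follows essentially the same approach as the paper: a greedy bottom-up peel-off producing pieces of size between $L$ and roughly $2L$, with the sub-$L$ leftover at the root absorbed into the last piece to give the $3L$ bound. The paper phrases the same idea more tersely (repeatedly take the vertex furthest from the root with at least $L$ descendants and strip off a union of its branches), whereas you implement it via an explicit post-order traversal and are more careful about why the leftover and the last piece meet in a single vertex; the underlying argument is the same.
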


\begin{proof}
 Let $v$ be a vertex of $T$ furthest from the root such that $v$ has at least
$L$ descendants.  Then each branch of $T$ lying below $v$ has at most $L$ edges,
so some union of these branches has size between $L$ and $2L$; let this be
$R_1$.  We obtain $R_2, \dotsc, R_K$ similarly until there are less than $L$
edges of $T$ remaining, which we add to $R_K$.
\end{proof}

Write $\R = \{R_1, \dotsc, R_K\}$ for the corresponding set of abstract rooted
trees, up to isomorphism.  In an abuse of notation we use $R_i$ to refer to both
the specific subtree of $T$ and its isomorphism type.

It is convenient to number the $R_i$ such that $R_1 \cup \dotsb \cup R_j$ is a
subtree of $T$ containing the root for each $j$.  We can then describe the
second model for the construction of a random homomorphism as follows.  For each
$v \in V(G)$ and $R\in\R$, let $\Lambda_{v, R}$ be a list of independent random
homomorphisms from $R$ to $G$ that map the root of $R$ to $v$.  Choose a vertex
$v_1$ from the given distribution for the image of the root of $T$ and identify
$\phi(R_1)$ with the first entry from $\Lambda_{v_1,R_1}$.  (If $R_1$ has a
non-trivial automorphism group then there is a choice of identification of $R_1$
with the reference copy in $\R$.  The choice is unimportant provided the same
choice is made every time.)  Then at each stage $j$ we have already determined
the image $v_j$ of the root of $R_j$, and we identify $\phi(R_j)$ with the first
unused element of $\Lambda_{v_j, R_j}$.

Now let $T$ be a rooted tree with $\length n^2$ edges.  As before we want to show
that $T$ `visits' most vertices of $G$ about the right number of times.
We need to be careful here about what counts as a `visit': what we want to count
is the number of times an edge leaves a vertex, as that is the number of entries
of the corresponding list that will be examined.  So we say $\phi(T)$
\defn{visits} $x \in V(G)$ whenever $uv$ is an edge of $T$ with $u$ the parent
of $v$ and $\phi(u) = x$; the number of visits $\phi(T)$ makes to $x$ is the
number of edges $uv$ for which this occurs.

There are three places where the argument in the proof of
\lem{hitting-large-balanced-set} needs modification or additional details need
to be checked.

\begin{enumerate}
\item[(i)]
In the path case the edges (or vertices) of the blocks had a natural order and
the blocks were all the same size.  In the tree case we are free to choose a
labelling of the edges in each block, but the blocks might still have different
sizes: when we look at the $2L$th edge from each block, are there enough blocks
with $2L$ edges that Chernoff's inequality will give good concentration?
\item[(ii)]
In the path case the set of list entries examined was parameterised by multisets
of vertices of $G$.  In the tree case the set of list entries examined is
instead parameterised by multisets of pairs $(v, R)$ with $v \in V(G)$ and $R
\in \R$.  So the factor $\binom {K+n-1} {n-1}$ in the final sum needs to be
replaced by $\binom {K+n|\R|-1} {n|\R|-1}$, and we must restrict the size of
$\R$ to prevent this becoming too large.
\item[(iii)]
In the path case we had to ignore the first two vertices of each block as we
needed to take two steps before we had good information about the distribution
over vertices.  This was safe because the ignored vertices were only a $o(1)$
fraction of the total number of vertices.  In the tree case we must ignore the
edges whose start point is the root of the block or is a child of the root.  We
need to ensure that the number of ignored edges is at most a small fraction of
the total number of edges.
\end{enumerate}

Problem (i) is avoided by throwing away the small number of edges that receive a
label shared by few other edges. If we throw away all edges that receive a label
which is used less than $\epsilon n^2 / L^2$ times then the total number of
edges thrown away is less than $3 \epsilon n^2 / L$ as there are at most $3L$
edges in each block.

\begin{figure}[ht]
\begin{centering}
\includegraphics{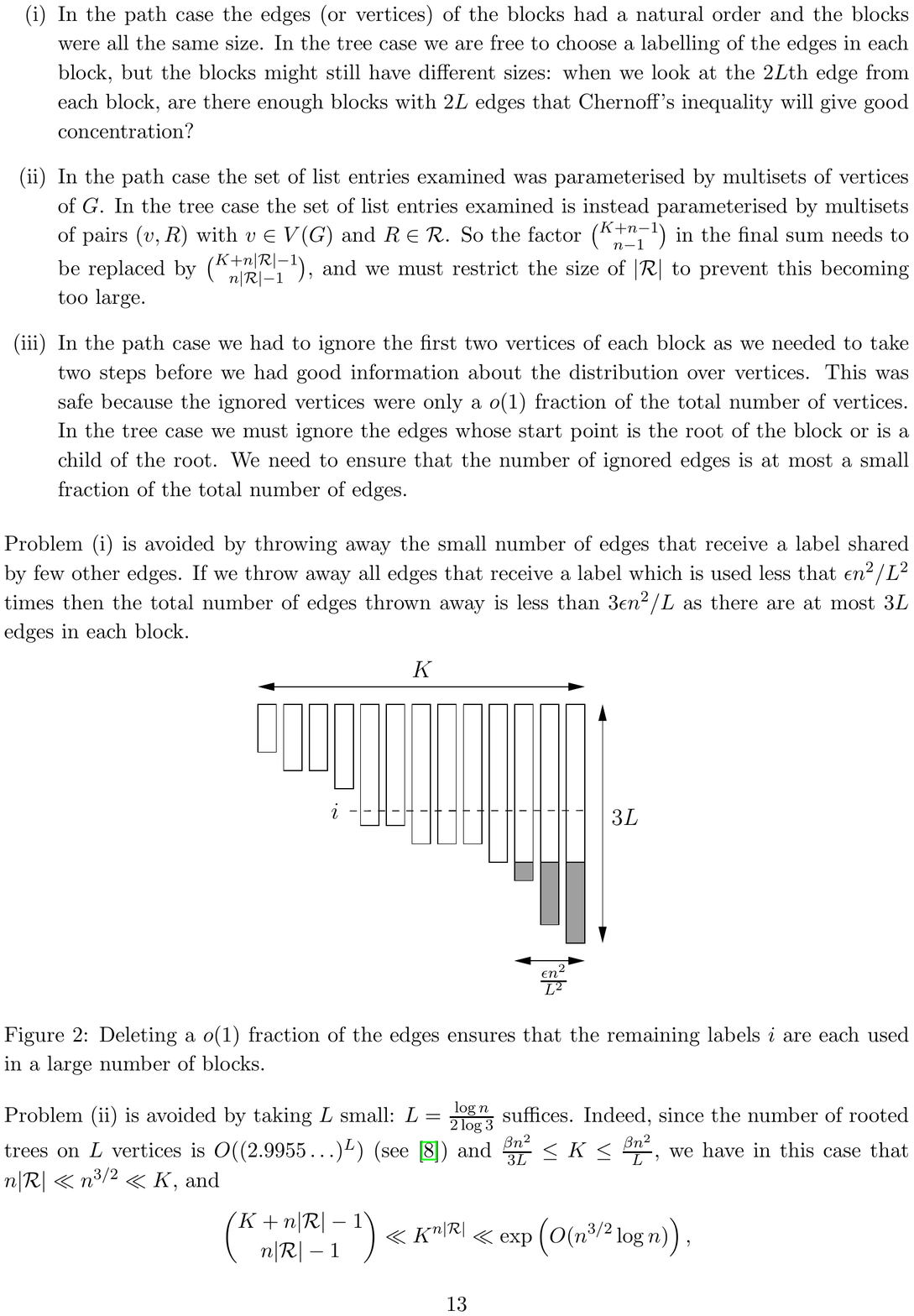}
\caption{Deleting a $o(1)$ fraction of the edges ensures that the remaining
labels $i$ are each used in a large number of blocks.}
\label{fig:pruning-blocks}
\end{centering}
\end{figure}

Problem (ii) is avoided by taking $L$ small: $L=\frac{\log n}{2\log 3}$
suffices.  Indeed, since the number of rooted trees on $L$ vertices is
$O((2.9955\ldots)^L)$ (see \cite{Otter}) and $\frac{\length n^2}{3L} \leq K \leq
\frac{\length n^2}{L}$, we have in this case that $n|\R| \ll n^{3/2} \ll K$, and
\[
 \binom {K+n|\R|-1} {n|\R|-1} \ll K^{n|\R|} \ll \exp\left(O(n^{3/2}\log
n)\right),
\]
which is small enough that it will not overpower the $e^{- cK}$-type decay.

Problem (iii) is avoided by having $\Delta^2$, the square of the maximum degree
of $T$ small (depending on the desired level of quasirandomness) compared to
$L$: so $\Delta$ can be as large as a small multiple of $\sqrt {\log n}$.

With these modifications to our earlier argument we obtain the following result.

\begin{theorem}
 \label{thm:random-homomorphism}
 Given $\length, \rho, \eta >0$ there exists $\epsilon, c > 0$ 
 such that the following holds.  Let $G$ be an $n$-vertex $\epsilon$-quasirandom
graph with $\rho \binom n 2$ edges, let $T$ be a rooted tree of size $\length n^2$
with maximum degree $\Delta \leq c \sqrt{\log n}$ and let $\phi$ be a
 random homomorphism from $T$ to $G$ such that the image of the root is
balanced. Then, with probability
 $1 - \Oe$, the subgraph $\GT$ of $G$ consisting of the edges of $\phi (T)$
is $\eta $-quasirandom with
 $(1 - e^{-2\length /\rho } + \Oe)\rho \binom n 2$ edges.
\end{theorem}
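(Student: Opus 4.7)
The plan is to follow the scheme of Theorem \ref{thm:general-case}: prove a tree analogue of Lemma \ref{hitting-large-balanced-set}---with probability $1-\Oe$ there is a set $B\subseteq V(G)$ of at least $(1-\delta)n$ vertices, each visited at least $(1-O(\delta))\length n$ times by $\phi(T)$, where $\delta=O(\epsilon^{1/4})$---and then deduce the theorem as at the end of the proof of Theorem \ref{thm:general-case}. Given the analogue, the edge multiset of $\GT$ differs from that of $\GL{\length/\density}$ by at most $O(\delta)\length n^2$ entries, so Theorem \ref{thm:list-quasirandomness} delivers an $\Oe$-quasirandom $\GT$ with $(1-e^{-2\length/\density}+\Oe)\density\binom n 2$ edges.

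To prove the analogue I would apply Lemma \ref{lem:decompose-tree} with $L=c_1\log n$ for a small absolute constant $c_1>0$, writing $T$ as an edge-disjoint union of $K=\Theta(n^2/\log n)$ rooted subtrees $R_1,\dotsc,R_K$ of sizes in $[L,3L]$, and fix an edge-labelling on each abstract tree in $\R$. In the block list model, conditional on the multiset $M$ of pairs $(v,R)\in V(G)\times\R$ used by $\phi$, the $K$ block realisations are independent random homomorphisms. Choose $c_1$ so that Otter's bound $|\R|=O(\alpha^{3L})$ (with $\alpha<3$ absolute) yields $|\R|\leq\sqrt n$; then there are at most $\binom{K+n|\R|}{n|\R|}\leq\exp(O(n^{3/2}\log n))$ choices of $M$.

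Now imitate the two-step structure of Lemma \ref{hitting-large-balanced-set}. First, the root of $R_1$ is balanced by hypothesis, and for $j\geq 2$ the root of $R_j$ equals $\phi(u)$ for some non-root $u\in V(T)$, all but at most $\Delta$ of which lie at tree-distance $\geq 2$ from $v_0$; Lemma \ref{bigsetprobabilities} then gives $\Pr{\phi(u)\text{ unbalanced}}\leq 2\delta^2$ for such $u$, and Markov's inequality produces at most $\delta K$ unbalanced block roots with probability $\geq 1-2\delta$. Second, fix any multiset $M$ of $(1-\delta)K$ balanced root-type pairs and any $S\subseteq V(G)$ with $|S|\geq\delta n$; within each block count only the edges $uw$ of $T$ whose parent endpoint $u$ has block-depth $\geq 2$, discarding at most $\Delta^2\leq c^2\log n$ edges per block (an $\Oe$-fraction overall for small $c$). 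Lemma \ref{bigsetprobabilities} gives $\Pr{\phi(u)\in S}\geq|S|/n-O(\sqrt\epsilon/\density)\geq\delta-\delta^2$ on each counted edge, and since block contributions are independent given $M$ and bounded by $3L$, Hoeffding's inequality yields
\[
 \Pr{X_{M,S}<(\delta-O(\delta^2))\length n^2}\leq\exp(-\Omega(\delta^4 n^2/L)).
\]
Union-bounding over $M$ and $S$ costs $\exp(O(n^{3/2}\log n))\cdot 2^n$, comfortably dominated by $\exp(-\Omega(\delta^4 n^2/\log n))$.

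The main obstacle is the simultaneous calibration of $L$ and $c$: the count $|\R|=O(\alpha^{3L})$ must stay below $\sqrt n$, the $\Delta^2$ edges lost per block near the block boundary must form an $\Oe$-fraction of all edges, and the Hoeffding tail must beat the union-bound cost. All three constraints can be met by taking $L$ a small multiple of $\log n$ and $\Delta\leq c\sqrt{\log n}$ for $c$ small, which are precisely the hypotheses of the theorem.
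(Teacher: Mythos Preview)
Your proposal is correct and follows the same overall route as the paper: decompose $T$ via Lemma~\ref{lem:decompose-tree} with $L$ a small multiple of $\log n$, use the block list model indexed by pairs $(v,R)\in V(G)\times\R$, bound the number of unbalanced block roots by Markov, and then, for each fixed multiset $M$ of balanced root--type pairs, get a tail bound strong enough to beat the union bound $\binom{K+n|\R|-1}{n|\R|-1}\cdot 2^n=\exp(O(n^{3/2}\log n))$.

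The one substantive difference is in the concentration step. The paper keeps the path-case strategy of applying Chernoff \emph{per edge-label}: for each label $j$, the indicators $X_{ij}$ are independent across blocks $i$, giving a bound of the form $Le^{-c\delta^4 K}$. Because in the tree case blocks have different shapes and sizes, not every label is present in every block, so the paper has to throw away labels used by fewer than $\epsilon n^2/L^2$ blocks (its ``problem~(i)''). You instead apply Hoeffding to the \emph{per-block} totals $Y_i\in[0,3L]$, obtaining $\exp(-\Omega(\delta^4 n^2/L))$ directly. This sidesteps problem~(i) entirely and is arguably cleaner; the price is a tail exponent smaller by a factor of $L$ compared to the paper's, but since $L=\Theta(\log n)$ this is harmless against the $\exp(O(n^{3/2}\log n))$ union-bound cost. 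The treatment of problems~(ii) and~(iii)---controlling $|\R|$ via Otter's bound and discarding the $O(\Delta^2)$ edges near each block root---matches the paper.
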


It would be interesting to know how large $\Delta (T)$ can be taken in
\thm{thm:random-homomorphism}. By 
the example at the start of this section we must have $\Delta (T)$ small
compared to $n$. Is this already enough?

\vspace{2em}
{\small \noindent
\textbf{Acknowledgements.} We would like to thank the organisers of the
\emph{Probabilistic methods in graph theory}
workshop at the University of Birmingham where we heard about this problem. We
would also like to thank Jan Hladk\'y
for some helpful discussions and for suggesting the extension of our results on
paths to more general trees.
}

\end{document}